\newcommand{\winsequence}[1]{\underline{#1}}
\newcommand{\overmark}[1]{\overline{#1}} 
\theoremstyle{theorem}
\newtheorem{theorem}{Theorem}
\theoremstyle{definition}
\theoremstyle{corollary}
\newtheorem{cor}{Corollary}
\theoremstyle{lemma}
\newtheorem{lemma}{Lemma}
\begin{document}

\title{Catch-Up: A Rule That Makes Service Sports More Competitive\footnote{This paper is forthcoming in \textit{The American Mathematical Monthly}, 2018.}}
\markright{The Catch-Up Rule in Service Sports}
\author{\\ \\ Steven J. Brams\\
	\small{Department of Politics, New York University, New York, NY  10012, USA} \\ \small{steven.brams@nyu.edu}\\ \\
	Mehmet S. Ismail \\
	\small{Department of Political Economy, King's College London, London, WC2R 2LS, UK} \\ \small{mehmet.s.ismail@gmail.com} \\ \\
	D. Marc Kilgour \\
	\small{Department of Mathematics, Wilfrid Laurier University, Waterloo, Ontario N2L 3C5, Canada} \\ \small{mkilgour@wlu.ca}\\ \\
	Walter Stromquist\\
	\small{132 Bodine Road, Berwyn, PA 19312, USA}\\
	\small{mail@walterstromquist.com}}
\date{}
\maketitle
\newpage
\begin{abstract}
Service sports include two-player contests such as volleyball, badminton, and squash. We analyze four rules, including the Standard Rule (\textit{SR}), in which a player continues to serve until he or she loses. The Catch-Up Rule (\textit{CR}) gives the serve to the player who has lost the previous point---as opposed to the player who won the previous point, as under \textit{SR}. We also consider two Trailing Rules that make the server the player who trails in total score. Surprisingly, compared with \textit{SR}, only \textit{CR} gives the players the same probability of winning a game while increasing its expected length, thereby making it more competitive and exciting to watch. Unlike one of the Trailing Rules, \textit{CR} is strategy-proof. By contrast, the rules of tennis fix who serves and when; its tiebreaker, however, keeps play competitive by being fair---not favoring either the player who serves first or who serves second.
\end{abstract}
\noindent \emph{Keywords}: Sports rules; service sports; Markov processes; competitiveness; fairness; strategy-proofness
\newpage
\section{Introduction}

In service sports, competition between two players (or teams) involves one player serving some object---usually a ball, but a ``shuttlecock" in badminton---which the opponent tries to return.  Service sports include tennis, table tennis (ping pong), racquetball, squash, badminton, and volleyball.

If the server is successful, he or she wins the point; otherwise---in most, but not all, of these sports---the opponent does.  If the competitors are equally skilled, the server generally has a higher probability of winning than the receiver.  We will say more later about what constitutes winning in various service sports.

In some service sports, such as tennis and table tennis, the serving order is fixed---the rules specify when and for how long each player serves. By contrast, the serving order in most service sports, including racquetball, squash, badminton, and volleyball, is variable: It depends on who won the last point.\footnote{In game theory, a game is defined by ``the totality of the rules that describe it" \cite[p. 49]{neumann1944}.  The main difference between fixed-order and variable-order serving rules is that when the serving order is variable, the course of play (order of service) may depend on the results on earlier points, whereas when the order is fixed, then so is the course of play.  Put another way, the serving order is determined exogenously in fixed-order sports but endogenously in variable-order sports.}  In these sports, if the server won the last point, then he or she serves on the next point also, whereas if the receiver won, he or she becomes the new server.  In short, the winner of the last point is the next server. We call this the Standard Rule (\textit{SR}).

In this paper, we analyze three alternatives to \textit{SR}, all of which are variable. The simplest is the
\begin{itemize}
	\item Catch-Up Rule (\textit{CR}): Server is the loser of the previous point---instead of the winner, as under  \textit{SR}.\footnote{The idea of catch-up is incorporated in the game of Catch-Up (see http://game.engineering.nyu.edu/projects/catch-up/ for a playable version), which is analyzed in \cite{isaksen2015}.}
\end{itemize}
\noindent The two other serving rules are Trailing Rules (\textit{TR}s), which we also consider variable: A player who is behind in points becomes the server.  Thus, these rules take into account the entire history of play, not just who won or lost the previous point.  If there is a tie, then who becomes the server depends on the situation immediately prior to the tie:
\begin{itemize}
	\item \textit{TRa}: Server is the player who was ahead in points prior to the tie;
	\item \textit{TRb}: Server is the player who was behind in points prior to the tie.
\end{itemize}

We calculate the players' win probabilities under all four rules. Our only data about the players are their probabilities of winning a point on serve, which we take to be equal exactly when the players are equally skilled. We always assume that all points (rounds) are independent. Among other findings, we prove that \textit{SR}, \textit{CR}, and \textit{TRa} are strategy-proof (or incentive compatible)---neither the server nor the receiver can ever benefit from deliberately losing a point. But \textit{TRb} is strategy-vulnerable: Under \textit{TRb}, it is possible for a player to increase his or her probability of winning a game by losing a point deliberately, under certain conditions that we will spell out.

We analyze the probability that each player wins a game by being the first to score a certain number of points (Win-by-One); later, we analyze Win-by-Two, in which the winner is the first player to score at least the requisite number of points and also to be ahead by at least two points at that time.  We also assess the effects of the different rules on the expected length of a game, measured by the total number of points until some player wins.

Most service sports, whether they use fixed or variable serving rules, use Win-by-Two.  We compare games with and without Win-by-Two to assess the effects of this rule.  Although Win-by-Two may prolong a game, it has no substantial effect on the probability of a player's winning if the number of points needed to win is sufficiently large and the players are equally skilled.  The main effect of Win-by-Two is to increase the drama and tension of a close game.

The three new serving rules give a break to a player who loses a point, or falls behind, in a game.  This change can be expected to make games, especially games between equally skilled players, more competitive---they are more likely to stay close to the end and, hence, to be more exciting to watch.

\textit{CR}, like \textit{SR}, is Markovian in basing the serving order only on the outcome of the previous point, whereas \textit{TRa} and \textit{TRb} take accumulated scores into account.\footnote{\textit{TRa} is the same as the ``behind first, alternating order" mechanism of \cite{Anbarci15}, which says that if a player is behind, it serves next; if the score is tied, the order of service alternates (i.e., switches on the next round).  Serving will alternate under this mechanism only if the player who was ahead prior to a tie---and, therefore, whose opponent served successfully to create the tie---then becomes the new server, causing an alternation in the server.  But this is just \textit{TRa}, as the server is the player who was behind in points prior to the tie.}  The two \textit{TR}s may give an extra advantage to weaker players, which is not true of \textit{CR}. As we will show, under Win-by-One, \textit{CR} gives the players exactly the same probabilities of winning a game as \textit{SR} does.  At the same time, \textit{CR} increases the expected length of games and, therefore, also their competitiveness.  For this reason, our major recommendation is that \textit{CR} replace \textit{SR} to enhance competition in service sports with variable service rules.

\section{Win-by-One}

\subsection{Probability of Winning}
Assume that $A$ has probability $p$ of winning a point when $A$ serves, and $B$ has probability $q$ of winning a point when $B$ serves.  (Warning! In this paper, $q$ is not an abbreviation for $1-p$.)  We always assume that $A$ serves first, and that $0<p<1$ and $0<q<1$.  We will use pronouns ``he'' for $A$, ``she'' for $B$.

We begin with the simple case of Best-of-3, in which the first player to reach 2 points wins.  There are three ways for $A$ to win: (1) by winning the first two points (which we denote $\winsequence{AA}$), (2) by winning the first and third points ($\winsequence{ABA}$), and (3) by winning the second and third points ($\winsequence{BAA}$).

Any of these ``win sequences'' can occur under any of the four rules described in the Introduction (\textit{SR}, \textit{CR}, \textit{TRa}, \textit{TRb}), but each rule would imply a different sequence of servers.  A win sequence tells us which player won each point, but it does not tell us which player served each point.  For this purpose we define an ``outcome'' as a win sequence that includes additional information: We use $A$ or $B$ when either player wins when serving, and $\overmark{A}$ and $\overmark{B}$ when either player wins when the other player is serving.  Thus, a bar over a letter marks a server loss.

For example, under \textit{SR}, the win sequence $\winsequence{AA}$ occurs when $A$ wins two serves in a row, and so the outcome is $AA$ (no server losses).  The win sequence $\winsequence{ABA}$ occurs when $A$ serves the first point and wins, $A$ retains the serve for the second point and loses, and then $B$ gains the serve for the third point and loses, giving the outcome $A\overmark{B}\overmark{A}$ (two server losses).  Under \textit{SR} the win sequence $\winsequence{BAA}$ corresponds to the outcome $\overmark{B}\overmark{A}A$.

Because we assume that serves are independent events---in particular, not dependent on the score at any point---we can calculate the probability of an outcome by multiplying the appropriate probabilities serve by serve.  For example, the outcome $AA$ occurs with probability $p^2$, because it requires two server wins by $A$.  The outcome $A\overmark{B}\overmark{A}$ has probability $p(1 - p)(1 - q)$, and the outcome $\overmark{B}\overmark{A}A$ has probability $(1-p)(1-q)p$.  Adding these three probabilities gives the total probability that $A$ wins Best-of-3 under \textit{SR}:
\[ Pr_{SR}(A) = p^2 + p(1-p)(1-q) + (1-p)(1-q)p = 2p-p^2-2pq+2p^2q. \] 

The translation from outcome to probability is direct: $A$, $B$, $\overmark{A}$, $\overmark{B}$ correspond to probabilities $p$, $q$, $(1-q)$, $(1-p)$, respectively, whatever the rule.  Under \textit{CR}, the win sequences $\winsequence{AA}$, $\winsequence{ABA}$, $\winsequence{BAA}$ correspond to the outcomes $A\overmark{A}$ (probability $p(1-q)$), $ABA$ (probability $pqp$) and $\overmark{B}A\overmark{A}$ (probability $(1-p)p(1-q)$) respectively. So the probability that $A$ wins Best-of-3 under \textit{CR} is
\[ Pr_{CR}(A) = p(1-q) + pqp + (1-p)p(1-q) = 2p-p^2-2pq+2p^2q. \]
Thus, the totals for \textit{SR} and \textit{CR} are the same, even though the probabilities being added are different.

Table~\ref{table1} extends this calculation to \textit{TRa} and \textit{TRb}.  It shows that $A$ has the same probability of winning when using \textit{SR}, \textit{CR}, and \textit{TRa}, but a different probability when using \textit{TRb}.  The latter probability is generally smaller:
\[ Pr_{SR}(A) = Pr_{CR}(A) = Pr_{TRa}(A)  \geq Pr_{TRb}(A),\]
with equality only when $p+q=1$. The intuition behind this result is that \textit{TRb} most helps the player who falls behind.  This player is likely to be $B$ when $A$ serves first, making $B$'s probability of winning greater, and $A$'s less, than under the other rules. (Realistically, serving is more advantageous than receiving in most service sports, though volleyball is an exception, as we discuss later.)

{
	\renewcommand{\arraystretch}{1.2}
	\begin{table}[]
		\begin{center}
			\scalebox{0.85}{
				\begin{tabular}{|l|c|c|c|c|}
					\hline
					&        \multicolumn{3}{c|}{Probability that $A$ wins}   & Expected Length \\ \hline \hline
					\text{Win sequence:}   & $\winsequence{AA}$ & $\winsequence{ABA}$ & $\winsequence{BAA}$ &  \\ \hline \hline
					\textbf{\emph{SR}} & \multicolumn{3}{l|}{}  &  \\ \hline
					Outcome     & $AA$ & $A\overmark{B}\overmark{A}$  &  $\overmark{B}\overmark{A}A$ &     \\ \hline
					Probability & $p^2$ & $p(1-p)(1-q)$ & $(1-p)(1-q)p$  & \\ \hline
					Sum         & \multicolumn{3}{c|}{$2p-p^2-2pq+2p^2q$}  &  $3-q-p^2+pq$\\ \hline\hline
					\textbf{\emph{CR}} & \multicolumn{3}{l|}{}  &  \\ \hline
					Outcome     & $A\overmark{A}$ & $ABA$  &  $\overmark{B}A\overmark{A}$   &   \\ \hline
					Probability &$p(1-q)$ & $pqp$ & $(1-p)p(1-q)$  &  \\ \hline
					Sum         & \multicolumn{3}{c|}{$2p-p^2-2pq+2p^2q$}  & $2+p-p^2+pq$ \\ \hline\hline
					\textbf{\emph{TRa}} & \multicolumn{3}{l|}{}  &  \\ \hline
					Outcome     & $A\overmark{A}$ & $ABA$  &  $\overmark{B}A\overmark{A}$  &  \\ \hline
					Probability &$p(1-q)$ & $pqp$ & $(1-p)p(1-q)$ &   \\ \hline
					Sum         & \multicolumn{3}{c|}{$2p-p^2-2pq+2p^2q$} &  $2+p-p^2+pq$ \\ \hline\hline
					\textbf{\emph{TRb}} & \multicolumn{3}{l|}{}  &  \\ \hline
					Outcome     & $A\overmark{A}$ & $AB\overmark{A}$  &  $\overmark{B}AA$ &    \\ \hline
					Probability & $p(1-q)$ & $pq(1-q)$ & $(1-p)p^2$ & \\ \hline
					Sum         & \multicolumn{3}{c|}{$p+p^2-p^3-pq^2$} &  $2+p-p^2+pq$ \\ \hline
			\end{tabular} }
			\vspace{2mm}
			\caption{Probability that $A$ wins and Expected Length for a Best-of-3 game}
			\label{table1}
		\end{center}
	\end{table}
	
	But what if a game goes beyond Best-of-3?  Most service sports require that the winner be the first player to score 11, 15, or 21 points, not 2.  Even for Best-of-5, wherein the winner is the first player to score 3 points, the calculations are considerably more complex and tedious.  Instead of three possible ways in which each player can win, there are ten. We carried out these calculations and found that
	\[ Pr_{SR} (A)  =  Pr_{CR} (A) \geq Pr_{TRa} (A),\]
	with equality only in the case of $p+q=1$.  When the players are of equal strength ($p=q$) we also have $Pr_{TRa} (A) \geq Pr_{TRb}(A),$ with equality only when $p = 1/2$.  But in the Best-of-5 case, when $p \neq q$, no inequality holds generally.  We can even have $Pr_{TRb}(A) > Pr_{SR}(A)$ for certain values of $p$ and $q$.
	
	But does the equality of $A$'s winning under \textit{SR} and \textit{CR} hold generally?  Theorem 1 below shows that this is indeed the case.
	
	\vspace{3mm}
	\noindent \textbf{Theorem 1.}  \emph{Let $k \geq 1$.  In a Best-of-$(2k+1)$ game, $Pr_{SR}(A) = Pr_{CR}(A)$.}
	\vspace{2mm}
	
	For a proof, see the Appendix. Kingston \cite{kingston1976} proved that, in a Best-of-($2k+1$) game with $k \geq 1$, the probability of $A$'s winning under \textit{SR} is equal to the probability of $A$'s winning under any fixed rule that assigns $k+1$ serves to  $A$, and $k$ serves to $B$. This proof was simplified and made more intuitive by Anderson \cite{anderson1977}; it appears in slightly different form in a recent book \cite[Problem 90]{hess2014} and a forthcoming paper \cite[Problem 4]{winkler2018}. Independently, we found the same result and extended it to the case in which a player's probability of winning a point when he or she serves is variable.%
	\footnote{We have assumed that $A$ wins each of his serves with the same probability $p$, and that $B$ wins each of her serves with the same probability $q$. But what if $A$ wins his $i$th serve with probability $p_i$, for $i=1,2,\ldots$, and that $B$ wins her $j$th serve with probability $q_j$, for $j=1,2,\ldots$? Then the proof of Theorem 1 still applies. We can even make $A$'s probability $p_i$ depend on the results of $A$'s previous serves, and similarly for $B$. As an example, this more general model seems reasonable for volleyball, in which a team changes individual servers each time the team loses a serve.}
	
	The basis of the proof is the idea of serving schedule, which is a record of server wins and server losses organized according to the server. The schedule lists the results of $k+1$ serves by $A$, and then $k$ serves by $B$. To illustrate, the description of a Best-of-3 game requires a server schedule of length 3, consisting of a record of whether the server won or lost on $A$'s first and second serves, and on $B$'s first serve. The serving schedule $(W, L, L)$, for instance, records that $A_1 = W$ (i.e., $A$ won on his first serve), $A_2 = L$ ($A$ lost on his second serve), and $B_1 = L$ ($B$ lost on her first serve).  The idea is that, if the serving schedule is fixed, then both serving rules, \textit{SR} and \textit{CR}, give the same outcome as an Auxiliary Rule (\textit{AR}), in which $A$ serves twice and then $B$ serves once. Specifically,
	
	\begin{itemize}
		\item	\textit{AR}: $(A_1 = W, A_2 = L, B_1 = L )$, outcome $A\overline{BA}$. $A$ wins 2-1.
		\item	\textit{SR}: $(A_1 = W, A_2 = L, B_1 = L )$, outcome $A\overline{BA}$. $A$ wins 2-1.
		\item	\textit{CR}: $(A_1 = W, B_1 = L )$, outcome $A\overline{A}$. $A$ wins 2-0.
	\end{itemize}
	
	\noindent Observe that the winner is the same under each rule, despite the differences in outcomes and scores. The basis of our proof is a demonstration that, if the serving schedule is fixed, then the winners under \textit{AR}, \textit{SR}, and \textit{CR} are identical.
	
	The \textit{AR} service rule is particularly simple and permits us to establish a formula to determine win probabilities.  The fact that the \textit{AR}, \textit{SR}, and \textit{CR} service rules have equal win probabilities makes this representation more useful.
	
	\begin{cor}
		The probability that $A$ wins a Best-of-$(2k+1)$ game under any of the service rules \textit{SR}, \textit{CR}, or \textit{AR} is
		\begin{align}
		Pr_{AR}(A)&=Pr_{SR}(A)=Pr_{CR}(A) \notag\\
		&=\sum_{n=1}^{k+1}\sum_{m=0}^{n-1} p^n (1-p)^{k+1-n} q^m (1-q)^{k-m}  \binom{k+1}{n} \binom{k}{m}.\notag
		\end{align}
	\end{cor}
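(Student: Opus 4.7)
The plan is to invoke Theorem~1 together with the observation preceding it---that \textit{AR}, \textit{SR}, and \textit{CR} produce the same winner on every fixed serving schedule---to conclude immediately that $Pr_{AR}(A)=Pr_{SR}(A)=Pr_{CR}(A)$. The work then reduces to evaluating $Pr_{AR}(A)$ directly, which is attractive because under \textit{AR} the serving order is deterministic ($A$ serves $k+1$ times, then $B$ serves $k$ times), so binomial bookkeeping alone will suffice.

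The key step is to imagine that all $2k+1$ points are played out, regardless of whether one player has already reached $k+1$ points. Because \textit{AR}'s serving order does not depend on the score and the points are independent, playing the extra points does not change which player holds the majority at the end---and that player is the winner of the Best-of-$(2k+1)$ game. I would then let $n \in \{0,\ldots,k+1\}$ be the number of $A$'s serves that $A$ wins, and let $m \in \{0,\ldots,k\}$ be the number of $B$'s serves that $B$ wins. Then $A$'s final score is $n+(k-m)$ and $B$'s final score is $(k+1-n)+m$, so a direct comparison gives that $A$ wins if and only if $n \geq m+1$.

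By independence, the joint probability of any fixed pair $(n,m)$ is $\binom{k+1}{n}p^n(1-p)^{k+1-n}\binom{k}{m}q^m(1-q)^{k-m}$. Summing this expression over the region $\{(n,m):\,1\leq n\leq k+1,\ 0\leq m\leq n-1\}$ reproduces exactly the double sum in the statement. The only mild obstacle is justifying the ``play-it-out'' reduction, but this rests solely on the fact that under \textit{AR} the serving schedule is fixed in advance, so extending play beyond the game's actual endpoint neither alters the outcomes of the original points nor introduces any score-dependent change of server.
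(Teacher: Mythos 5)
Your proposal is correct and follows essentially the same route as the paper: both reduce the claim to Theorem~1's schedule argument, play out all $2k+1$ serves under \textit{AR}, and observe that $A$ wins exactly when his $n$ server wins and $B$'s $m$ server wins satisfy $m\leq n-1$, after which the binomial sum is immediate. No gaps.
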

	\begin{proof}
		As shown in the proof of Theorem 1, each of these probabilities is equal to the probability of choosing a service schedule in which $A$ has at least $k+1$ total wins.  Suppose that $A$ has exactly $n$ server wins among his $k{+}1$ serves.  If $n = 0$, $A$ must lose under any of the rules.  If $n = 1, 2, \ldots, k+1$, then $A$ wins if and only if $B$ has $m$ server wins among her first $k$ serves, with $m \leq n-1$.  The probability above follows directly.
	\end{proof}
	
	\subsection{Expected Length of a Game}
	The different service rules may affect not only the probability of $A$'s winning but also the expected length (\textit{EL}) of a game.  To illustrate the latter calculation, consider \textit{SR} for Best-of-3 in the case $p=q$.  Clearly, the game lasts either two or three serves. Table 1 shows that $A$ will win with probability $p^2$ after 2 serves; also, $B$ will win with probability $p(1-p)$ after 2 serves. Therefore, the probability that the game ends after two serves is $p^2+p(1-p) = p$, so the probability that it ends after three serves is $(1-p)$.  Hence, the expected length is
	\[ EL_{SR} = 2p+3(1 - p) = 3-p. \]
	
	To illustrate, if $p=0$, servers always lose, so the game will take 3 serves---and 2 switches of server---before the player who starts ($A$) loses 2 points to 1.  On the other hand, if $p = 1$, $EL_{SR}$ = 2, because $A$ will win on his first two serves.
	
	By a similar calculation, $EL_{CR} = 2+p$. We also give results for the two \textit{TR}s in Table 1, producing the following ranking of expected lengths for Best-of-3 games if $p > \frac12$,
	\[ EL_{TRb} = EL_{TRa} = EL_{CR} > EL_{SR}. \]
	
	\noindent Observe that the expected length of a game for Best-of-3 is a minimum under \textit{SR}. To give an intuition for this conclusion, if $A$ is successful on his first serve, he can end play with a second successful serve, which is fairly likely if $p$ is large. On the other hand, \textit{CR} and the \textit{TR}s shift the service to the other player, who now has a good chance of evening the score if $p > \frac12$.
	
	Calculations for Best-of-5 games show that
	\[ EL_{TRb} = EL_{TRa}  > EL_{CR} > EL_{SR}.\]
	
	\noindent Thus, both \textit{TR}s have a greater expected length than \textit{CR}, and the length of \textit{CR} in turn exceeds the length of \textit{SR}.\footnote{In an experiment, Ruffle and Volij \cite{ruffle2015} found that the average length of a Best-of-9 game was greater under \textit{CR} than under \textit{SR}.}
	
	\vspace{3mm}
	\noindent\textbf{Theorem 2.}  \emph{In a Best-of-$(2k+1)$ game for any $k \geq 1$ and $0 < p < 1$, $0 < q < 1$, the expected length of a game is greater under \textit{CR} than under \textit{SR} if and only if $p+q > 1$.}
	\vspace{3mm}
	
	\noindent The proof of Theorem 2 appears in the Appendix. The proof involves a long string of sums that we were unable to simplify or evaluate. However, we were able to manipulate them enough to prove the theorem. A more insightful proof would be welcome.
	
	Henceforth, we focus on the comparison between \textit{SR} and \textit{CR} for two reasons:
	\begin{enumerate}
		\item Under \textit{CR}, the probability of a player's winning is the same as under \textit{SR} (Theorem 1).
		\item Under \textit{CR}, the length of the game is greater (in expectation) than under \textit{SR}, provided $p+q > 1$ (Theorem 2).
	\end{enumerate}
	\noindent For these reasons, we believe that most service sports currently using the \textit{SR} rule would benefit from the \textit{CR} rule.  Changes should not introduce radical shifts, such as changing the probability of winning.  At the same time, \textit{CR} would make play appear to be more competitive and, therefore, more likely to stimulate fan (and player) interest.  \textit{CR} satisfies both of our criteria.
	
	\textit{CR} keeps games close by giving a player who loses a point the opportunity to serve and, therefore, to catch up, given $p + q > 1$.  Consequently, the expected length of games will, on average, be greater under \textit{CR} than \textit{SR}.  For Best-of-3 games, if $p = q = \frac23$, then $EL_{CR} = \frac83$, whereas $EL_{SR} = \frac73$.  Still, the probability that $A$ wins is the same under both rules ($\frac{16}{27} = 0.592$, from Table 1).  By Theorem 1, each player can rest assured that \textit{CR}, compared with \textit{SR}, does not affect his or her chances of winning.
	
	It is true under \textit{SR} and \textit{CR} that if $p = \frac23$, then $A$ has almost a 3:2 advantage in probability of winning ($\frac{16}{27} = 0.592$) in Best-of-3.  But this is less than the 2:1 advantage $A$ would enjoy if a game were decided by just one serve.  Furthermore, $A$'s advantage drops as $k$ increases for Best-of-($2k+1$), so games that require more points to win tend to level the playing field. In Best-of-5, $A$'s winning probability drops to $0.568$.
	
	Because the two \textit{TR}s do even better than \textit{CR} in lengthening games, would not one be preferable in making games closer and more competitive?  The answer is ``yes," but they would reduce $Pr(A)$, relative to \textit{SR} (and \textit{CR}), and so would be a more significant departure from the present rule.  For Best-of-3, $Pr_{TRb} (A) = \frac{14}{27} = 0.519$, which is 12.5\% lower than $Pr_{SR}(A) = Pr_{CR}(A) = \frac{16}{27} = 0.592$.\footnote{We recognize that it may be desirable to eliminate entirely the first-server advantage, but this is difficult to accomplish in a service sport, as the ball must be put into play somehow.  In fact, the first-server advantage decreases as the number of points required to win increases. Of course, any game is ex-ante fair if the first server is chosen randomly according to a coin toss, but it would be desirable to align ex-ante and ex-post fairness.  As we show later, the serving rules of the tiebreaker in tennis achieve this alignment exactly.}  But \textit{TRb} has a major strike against it that the other rules do not, which we explore next.
	
	\subsection{Incentive Compatibility}
	As discussed in the Introduction, a rule is \textit{strategy-proof} or \textit{incentive compatible} if no player can ever benefit from deliberately losing a point; otherwise, it is \textit{strategy-vulnerable}.\footnote{For an informative analysis of strategizing in sports competitions, and a discussion of its possible occurrence in the 2012 Olympic badminton competition, see \cite{pauly2014}.}
	
	\vspace{3mm}
	\noindent\textbf{Theorem 3.} \emph{\textit{TRb} is strategy-vulnerable, whereas \textit{SR} and \textit{CR} are strategy-proof. \textit{TRa} is strategy-proof whenever $p+q>1$.}
	\vspace{3mm}
	
	\begin{proof}
		To show that \textit{TRb} is strategy-vulnerable, assume a Best-of-3 game played under \textit{TRb}. We will show that there are values of $p$ and $q$ such that $A$ can increase his probability of winning the game by deliberately losing the first point.
		
		Recall that $A$ serves first. If $A$ loses on the initial serve, the score is 0-1, so under \textit{TRb}, $A$ serves again.  If he wins the second point (with probability $p$), he ties the score at 1-1 and  serves once more, again winning with probability $p$. Thus, by losing deliberately, $A$ wins the game with probability $p^2$.
		
		If $A$ does not deliberately lose his first serve, then there are three outcomes in which he will win the game:
		\begin{itemize}
			\item $A\overmark{A}$ with probability $p(1-q)$
			\item $AB\overmark{A}$ with probability $pq(1-q)$
			\item $\overmark{B}AA$ with probability $(1-p)p^2$.
		\end{itemize}
		\noindent Therefore, $A$'s probability of winning the game is greater when $A$ deliberately loses the first serve if and only if
		\[ p^2 > (p-pq)+(pq-pq^2)+(p^2-p^3),\]
		
		\noindent which is equivalent to $p(1-p^2-q^2) < 0$.  Because $p > 0$, it follows that $A$ maximizes the probability that he wins the game by deliberately losing his first serve when
		\[p^2+q^2 > 1. \]
		
		In $(p, q)$-space, this inequality describes the exterior of a circle of radius 1 centered at $(p, q) = (0, 0)$. Because the probabilities $p$ and $q$ can be any numbers within the unit square of this space, it is possible that $(p, q)$ lies outside this circle. If so, $A$'s best strategy is to deliberately lose his initial serve.
		
		We next show that \textit{SR} is strategy-proof. In a Best-of-$(2k+1)$ game played under \textit{SR}, let $(C, x, y)$ denote a state in which player $C$ ($C =A \text{ or } B$) is the server, $A$'s score is $x$, and $B$'s is $y$. For example, when the game starts, the state is $(A, 0, 0)$. Let $W_{AS}(C, x, y)$ denote $A$'s win probability from state $(C, x, y)$ under \textit{SR}. It is clear that $A$'s win probability $W_{AS}(A, x, y)$ is increasing in $x$.
		
		To show that a game played under \textit{SR} is strategy-proof for $A$, we must show that $W_{AS}(A, x + 1, y) \geq W_{AS}(B, x, y + 1)$ for any $x$ and $y$. Now
		\[W_{AS}(A, x, y) = pW_{AS}(A, x + 1, y) + (1-p)W_{AS}(B, x, y + 1),\]
		which implies that
		\begin{multline*}
		p[W_{AS}(A, x+1, y) - W_{AS}(A, x, y)] \\
		+ (1 - p)[W_{AS}(B, x, y + 1) - W_{AS}(A, x, y)] = 0.
		\end{multline*}
		Because $0 < p < 1$ and $W_{AS}(A, x + 1, y) \geq W_{AS}(A, x, y)$, the first term is nonnegative, so the second must be nonpositive. It follows that
		\[ W_{AS}(B, x, y + 1) \leq W_{AS}(A, x, y) \leq W_{AS}(A, x+1, y), \]
		as required. Thus, $A$ cannot gain under \textit{SR} by deliberately losing a serve, so \textit{SR} is strategy-proof for $A$. By an analogous argument, \textit{SR} is also strategy-proof for $B$.
		
		We next show that \textit{CR} is strategy-proof. We use the same notation for states, and denote $A$'s win probability from state $(C, x, y)$ by $W_{AC}(C, x, y)$ and $B$'s by $W_{BC}(C, x, y)$. As in the \textit{SR} case, it is clear that  $W_{AC}(C, x, y)$ is an increasing function of $x$.
		
		To show that a game played under \textit{CR} is strategy-proof for $A$, we must show that $W_{AC}(B, x+1, y) \geq W_{AC}(A, x, y + 1)$ for any $x$ and y. Now
		\[W_{AC}(A, x, y) = pW_{AC}(B, x+1, y) + (1-p)W_{AC}(A, x, y+1),\]
		which implies that
		\begin{multline*}
		p\left[W_{AC}(B, x+1, y) - W_{AC}(A, x, y)\right] \\
		+ (1 - p)\left[W_{AC}(A, x, y + 1) - W_{AC}(A, x, y)\right] = 0.
		\end{multline*}
		Again, it follows that
		\[ W_{AC}(A, x, y + 1) \leq W_{AC}(A, x, y) \leq W_{AC}(B, x+1, y), \]
		as required. Thus a game played under \textit{CR} is strategy-proof for $A$, and by analogy it is strategy-proof for $B$.
		
		The proof that \textit{TRa} is strategy-proof is left for the Appendix.
	\end{proof}
	
	To illustrate the difference between \textit{TRa} and \textit{TRb} and indicate its implications for strategy-proofness, suppose that the score is tied and that $A$ has the serve. If $A$ deliberately loses, $B$ will be ahead by one point, so $A$ will serve the second point. If $A$ is successful, the score is again tied, and \textit{TRb} awards the next serve to $A$, whereas \textit{TRa} gives it to $B$, who was ahead prior to the most recent tie. If $p > 1 - q$, the extra serve is an advantage to $A$.
	
	Suppose that the players are equally skilled at serving ($p=q$) in the Best-of-3 counterexample establishing that \textit{TRb} is not strategy-proof.  Then the condition for deliberately losing to be advantageous under \textit{TRb} is $p > 0.707$.  Thus, strategy-vulnerability arises in this simple game if both players have a server win probability greater than about 0.71, which is high but not unrealistic in most service sports.
	
	From numerical calculations, we know that for Best-of-5 (and longer) games, neither \textit{TRa} (nor the strategy-vulnerable \textit{TRb}) has the same win probability for $A$ as \textit{SR}, whereas by Theorem 1 \textit{CR} maintains it in Win-by-One games of any length.  This seems a good reason for focusing on \textit{CR} as the most viable alternative to \textit{SR}, especially because our calculations show that \textit{CR} increases the expected length of Best-of-5 and longer games, thereby making them appear more competitive.
	
	Most service sports are not Win-by-One but Win-by-Two (racquetball is an exception, discussed in Section 3).  If $k+1$ points are required to win, and if the two players tie at $k$ points, then one player must outscore his or her opponent by two points in a tiebreaker in order to win.
	
	We next analyze Win-by-Two's effect on $A$'s probability of winning and the expected length of a game.\footnote{Before tiebreakers for sets were introduced into tennis in the 1970s, Kemeny and Snell \cite[pp. 161--164]{kemeny1983} showed how the effect of being more skilled in winning a point in tennis ramifies to a game, a set, and a match.  For example, a player with a probability of 0.51 (0.60) of winning a point---whether the player served or not---had a probability of 0.525 (0.736) of winning a game, a probability of 0.573 (0.966) of winning a set, and a probability of 0.635 (0.9996) of winning a match (in men's competition, or Best-of-5 for sets).  In tennis, to win a game or a set now requires, respectively, winning by at least two points or at least two games (if there is no tiebreaker).  In effect, tiebreakers change the margin by which a player must win a set from at least two games to at least two points in the tiebreaker.}  We note that the service rule, which we take to be \textit{SR} or \textit{CR}, affects the tiebreak; starting in a tied position, \textit{SR} gives one player the chance of winning on two consecutive serves, whereas under \textit{CR} a player who loses a tiebreak must lose at least once on serve.  We later consider sports with fixed rules for serving and ask how Win-by-Two affects them.
	
	\section{Win-by-Two}
	
	To illustrate Win-by-Two, consider Best-of-3, in which a player wins by being the first to receive 2 points \emph{and} by achieving at least two more points than the opponent.  In other words, 2-0 is winning but 2-1 is not; if a 1-1 tie occurs, the winner will be the first player to lead by 2 points and will thus require more than 3 points.
	
	\subsection{Standard Rule Tiebreaker}
	Consider the ways in which a 1-1 tie can occur under \textit{SR}.  There are two sequences that produce such a tie:
	
	\begin{itemize}
		\item $A\overmark{B}$ with probability $p(1-p)$
		\item $\overmark{B}\overmark{A}$ with probability $(1-p)(1-q)$.
	\end{itemize}
	
	\noindent Thus, the probability of a 1-1 tie is
	\[ (p - p^2) + (1 - p - q + pq) = 1 + pq - p^2 - q, \]
	or $1-p$ if $p=q$.  As $p$ approaches 1 (and $p=q$), the probability of a tie approaches 0, whereas if $p = \frac23$, the probability is $\frac13$.
	
	We now assume that $p=q$ (as we will do for the rest of the paper).  In a Best-of-$(2k+1)$ tiebreaker played under \textit{SR}, recall that $Pr_{SR}(A)$ is the probability that $A$ wins when he serves first.  Then $1-Pr_{SR}(A)$ is the probability that $B$ wins when $A$ serves first.  Since $p=q$,  $1-Pr_{SR}(A)$ is also the probability that $A$ wins the tiebreaker when $B$ serves first. Then it follows that $Pr_{SR}(A)$ must satisfy the following recursion:
	\[ Pr_{SR}(A) =  p^2 + [p(1-p)(1-Pr_{SR}(A))] + [(1-p)^2Pr_{SR}(A)]. \]
	The first term on the right-hand side gives the probability that $A$ wins the first two points.  The second term gives the probability of sequence $A\overmark{B}$---so $A$ wins initially and then loses, recreating a tie---times the probability that $A$ wins when $B$ serves first in the next tiebreaker.  The third term gives the probability of sequence $\overmark{B}\overmark{A}$---in which $A$ loses initially and then $B$ loses, recreating a tie---times the probability that $A$ wins the next tiebreaker. Because $p > 0$, this equation can be solved for $Pr_{SR}(A)$ to yield
	\[ Pr_{SR}(A) = \frac{ 1}{3-2p}. \]
	
	Table 2 presents the probabilities and expected lengths under \textit{SR} and \textit{CR} corresponding to various values of $p$. Under \textit{SR}, observe that $A$'s probability of winning is greater than $p$ when $p = \frac13$ and less when $p = \frac23$.  More generally, it is easy to show that $Pr_{SR}(A) > p$ if $0 < p < \frac12$, and $Pr_{SR}(A)  < p$ if $\frac12 < p < 1$.  Thus, $A$ does worse in a tiebreaker than if a single serve decides a tied game when $\frac12 < p < 1$.
	When $p = 1$, $A$ will win the tiebreaker with certainty, because he serves first and will win every point.  At the other extreme, when $p$ approaches 0, $Pr_{SR}(A)$ approaches $\frac13$. 
	
	When there is a tie just before the end of a game, Win-by-Two not only prolongs the game over Win-by-One but also changes the players' win probabilities.  If $p = \frac23$, under Win-by-One, sequence $A\overmark{B}$ has twice the probability of producing a tie as $\overmark{B}\overmark{A}$. Consequently, $B$ has twice the probability of serving the tie-breaking point (and, therefore, winning) than $A$ does under Win-by-One.
	
	But under Win-by-Two, if $p = \frac23$ and $B$ serves first in the tiebreaker, $B$'s probability of winning is not $\frac23$, as it is under Win-by-One, but $\frac35$. Because we assume $p > \frac12$, the player who serves first in the tiebreaker is hurt---compared with Win-by-One---except when $p = 1$.\footnote{In gambling, the player with a higher probability of winning individual games does better the more games are played.  Here, however, when the players have the same probability $p$ of winning points when they serve, the player who goes first, and therefore would seem to be advantaged under \textit{SR} when $p > \frac12$, does not do as well under Win-by-Two as under Win-by-One.  This apparent paradox is explained by the fact that Win-by-Two gives the second player a chance to come back after losing a point, an opportunity not available under Win-by-One.}
	\begin{table}
		\center
		\begin{tabular}{|c||c|c|c|c|}
			\hline
			$p$ & $Pr_{SR}(A)$ & $Pr_{CR}(A)$ & $EL_{SR}$ & $EL_{CR}$ \\
			\hline \hline
			0 & undefined & 0.00 & $\infty$ & 2.00  \\
			\hline
			1/4 &0.40 & 0.33 &8.00 & 2.67  \\
			\hline
			1/3 &0.43 &0.40 &6.00 &3.00  \\
			\hline
			1/2 & 0.50 &0.50 &4.00 &4.00 \\
			\hline
			2/3 & 0.60 &0.57 &3.00 &6.00\\
			\hline
			3/4 & 0.67 &0.60 &2.67 & 8.00
			\\
			\hline
			1 & 1.00 &undefined &2.00 &$\infty$  \\
			\hline
		\end{tabular}
		\vspace{2mm}
		\caption{Probability ($Pr$) that $A$ wins, and Expected Length of a Game (\textit{EL}), under \textit{SR} and \textit{CR} for various values of $p$.}
	\end{table}
	By how much, on average, does Win-by-Two prolong a game?  Recall our assumption that $p=q$.  It follows that the expected length (\textit{EL}) of a tiebreaker does not depend on which player serves first (for notational convenience below, we assume that $A$ serves first).  The following recursion gives the expected length (\textit{EL}) under \textit{SR} when $p > 0$:
	\[ EL_{SR} =  [p^2+(1-p)p](2) + [p(1-p)+(1-p)^2](EL_{SR}+2).\]
	The first term on the right-hand side of the equation contains the probability that the tiebreaker is over in two serves (in sequences $AA$ and $\overmark{B}B$). The second term contains the probability that the score is again tied after two serves (in sequences $A\overmark{B}$ and $\overmark{B}\overmark{A}$), which implies that the conditional expected length is $EL_{SR}+2$.
	
	Assuming that $p > 0$, this relation can be solved to yield
	\[ EL_{SR}(A) = \frac{ 2}{p}. \]
	Clearly, if $p = 1$, $A$ immediately wins the tiebreaker with 2 successful serves, whereas the length increases without bound as $p$ approaches 0; in the limit, the tiebreaker under \textit{SR} never ends.
	
	\subsection{Catch-Up Rule Tiebreaker}
	We suggested earlier that \textit{CR} is a viable alternative to \textit{SR}, so we next compute the probability that $A$ wins under \textit{CR} using the following recursion, which mirrors the one for \textit{SR}:
	\[Pr_{CR}(A) =  p(1-p) + p^2 Pr_{CR}(A) + [(1-p)(p)[1-Pr_{CR} (A)]. \]
	
	\noindent The first term on the right-hand side is the probability of sequence $A\overmark{A}$, in which $A$ wins the first point and $B$ loses the second point, so $A$ wins at the outset.  The second term gives the probability of sequence $AB$---$A$ wins the first point and $B$ the second, creating a tie---times the probability that $A$ wins eventually.  The third term gives the probability of sequence $\overmark{B}A$---so $A$ loses initially and then wins, creating a tie---times the probability that $A$ wins when $B$ serves first in the tiebreaker, the complement of the probability that $A$ wins when he serves first.
	
	This equation can be solved for $Pr_{CR}(A)$ provided $p < 1$, in which case
	\[ Pr_{CR}(A)=\frac{2p}{1+2p}.\]
	Note that when $p = 1$, $Pr_{CR}(A)$ is undefined---not $\frac23$---because the game will never end since each win by one player leads to a win by the other player, precluding either player from ever winning by two points. For various values of $p < 1$, Table 2 gives the corresponding probabilities.
	
	What is the expected length of a tiebreaker under \textit{CR}?  When $p=q < 1$, $EL_{CR}$ satisfies the recursion
	\[ EL_{CR} =  [p(1-p)+(1-p)^2](2) + [p^2+(1-p)p](EL_{CR}+2), \]
	which is justified by reasoning similar to that given earlier for $EL_{SR}$.  This equation can be solved for $EL_{CR}$ only if $p < 1$, in which case
	\[ EL_{CR}=\frac{2}{1-p}. \]
	Unlike $EL_{SR}$, $EL_{CR}$ increases with $p$, but it approaches infinity at $p = 1$, because the tiebreaker never ends when both players alternate successful serves.
	
	\subsection{Comparison of Tiebreakers}
	We compare $Pr_{SR}(A)$ and $Pr_{CR}(A)$ in Figure 1.  Observe that both are increasing in $p$, though at different rates.  The two curves touch at $p = \frac12$, where both probabilities equal $\frac 12$.  But as $p$ approaches 1, $Pr_{SR}(A)$ increases at an increasing rate toward 1, whereas $Pr_{CR}(A)$ increases at a decreasing rate toward $\frac23$.  When $p > \frac12$, $A$ has a greater advantage under \textit{SR}, because he is fairly likely to win at the outset with two successful serves, compared to under \textit{CR} where, if his first serve is successful, $B$ (who has equal probability of success) serves next.
	
	\begin{figure}
		\begin{center}
			\includegraphics[width=0.88\textwidth]{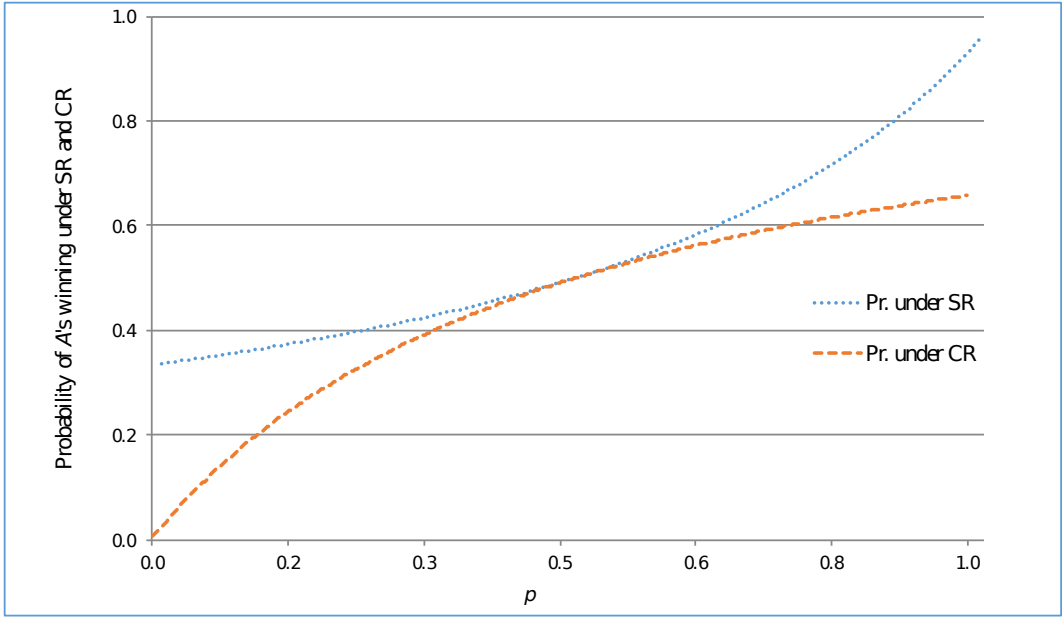}
			\caption{Graph of $Pr_{SR}(A)$ and $Pr_{CR}(A)$ as a function of $p$.}
			\label{fig1}
		\end{center}
	\end{figure}
	
	The graph in Figure 2 shows the inverse relationship between $EL_{SR}$ and $EL_{CR}$ as a function of $p$. If $p = \frac23$, which is a realistic value in several service sports, the expected length of the tiebreaker is greater under \textit{CR} than under \textit{SR} (6 vs. 3), which is consistent with our earlier finding for Win-by-One: $EL_{CR} > EL_{SR}$ for Best-of-$(2k+1)$ (Theorem 2).  Compared with \textit{SR}, \textit{CR} adds 3 serves, on average, to the tiebreaker, and also increases the expected length of the game prior to any tiebreaker, making a tiebreaker that much more likely.
	
	\begin{figure}
		\begin{center}
			\includegraphics[width=0.88\textwidth]{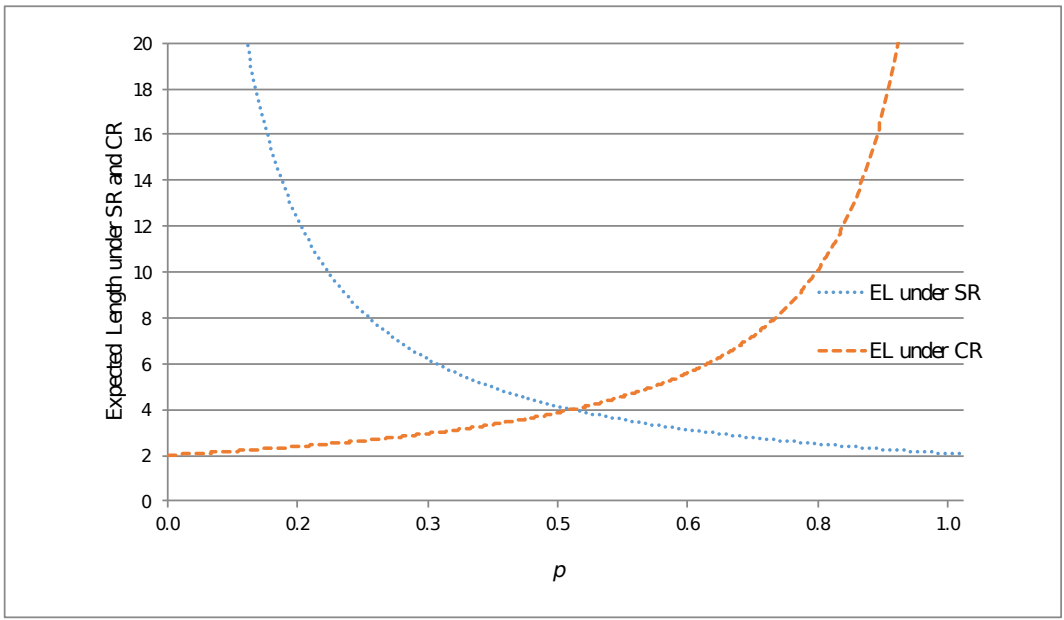}
			\caption{Graph of $EL_{SR}$ and $EL_{CR}$ as a function of $p$.}
			\label{fig2}
		\end{center}
	\end{figure}
	
	But we emphasize that in the Win-by-One Best-of-($2k+1$) game, $Pr_{SR}(A) = Pr_{CR}(A)$ (Theorem 1).  Thus, compared with \textit{SR}, \textit{CR} does not change the probability that $A$ or $B$ wins in the regular game.  But if there is a tie in this game, the players' win probabilities may be different in an \textit{SR} tiebreaker versus a \textit{CR} tiebreaker. For example, if $p = \frac23$ and $A$ is the first player to serve in the tiebreaker, he has a probability of $\frac35 = 0.600$ of winning under \textit{SR} and a probability of $\frac47 = 0.571$ of winning under \textit{CR}.
	
	\subsection{Numerical Comparisons}
	Clearly, serving first in the tiebreaker is a benefit under both rules.  For Best-of-3, we showed in Section 2 that if $p = \frac23$, the probability of a tie is $\frac13$, and $B$ is twice as likely as $A$ to serve first in the tiebreaker under \textit{SR}.
	
	In Table 3, we extend Best-of-3 to the more realistic cases of Best-of-11 and Best-of-21 for both Win-by-One (WB1) and Win-by-Two (WB2) for the cases $p = \frac23$ and $p = \frac34$.  Under WB1, the first player to score $m$ = 6 or 11 points wins; under WB2, if there is a 5-5 or 10-10 tie, there is a tiebreaker, which continues until one player is ahead by 2 points.
	
	We compare $Pr_{SR}(A)$ and $Pr_{CR}(A)$, which assume WB1, with the probabilities $Qr_{CR}(A)$ and $Qr_{CR}(A)$, which assume WB2, for $p = \frac23$ and $\frac34$. The latter probabilities take into account the probability of a tie, $Pr(T)$, which adjusts the probabilities of $A$ and $B$'s winning.
	
	For the values of $m = 3$, $11$, and $21$ and $p = \frac23$ and $\frac34$, we summarize below how the probabilities of winning and the expected lengths of a game are affected by \textit{SR} and \textit{CR} and Win-by-One and Win-by-Two:
	
	\begin{enumerate}
		\item As $m$ increases from $m = 3$ to 11 to 21, the probability of a tie, $Pr(T)$, decreases by more than a factor of two for both $p = \frac23$ and $p = \frac34$.  Even at $m = 21$, however, $Pr(T)$ is always at least 12\% for \textit{SR} and at least 25\% for \textit{CR}, indicating that, especially for \textit{CR}, Win-by-Two will often end in a tiebreaker.
		
		\item For $p = \frac23$, the probability of a tie, $Pr(T)$, is twice as great under \textit{CR} as \textit{SR}; it is three times greater for $p = \frac34$.  Thus, games are much closer under \textit{CR} than \textit{SR} because of the much greater frequency of ties under \textit{CR}.
		
		\item For Best-of-11 and Best-of-21, the probability of $A$'s winning under Win-by-One ($Pr(A)$) and Win-by-Two ($Qr(A)$) falls within a narrow range (53--57\%), whether \textit{CR} or \textit{SR} is used, echoing Theorem 1 that $Pr(A)$ for \textit{CR} and \textit{SR} are equal for Win-by-One. They stay quite close for Win-by-Two.
		
		\item 	The expected length of a game, \textit{EL}, is always greater under Win-by-Two than under Win-by-One.  Whereas \textit{EL} for Win-by-Two never exceeds \textit{EL} for Win-by-One by more than one game under \textit{SR}, under \textit{CR} this difference may be two or more games, whether $p = \frac23$ or $\frac34$.  Clearly, the tiebreaker under Win-by-Two may significantly extend the average length of a game under \textit{CR}, rendering the game more competitive.
		
	\end{enumerate}
	
	\begin{table}[]
		\centering
		\scalebox{0.8}{
			\begin{tabular}{|ll||l|l|l|l|l|l|l|}
				\hline
				$p = \frac23$ & Best-of-$m$ & $Pr(A)$ & $Pr(B)$ & $Qr(A$) & $Qr(B)$ & $Pr(T)$ & $EL(WB1)$ & $EL(WB2)$ \\ \hline\hline
				& $m = 3 $    & 0.593 & 0.407 & 0.600 & 0.400 & 0.333 & 2.333   & 3.000   \\
				\textit{SR}     & $m = 11$    & 0.544 & 0.456 & 0.544 & 0.456 & 0.173 & 8.650   & 8.995   \\
				& $m = 21$    & 0.531 & 0.469 & 0.531 & 0.469 & 0.124 & 17.251  & 17.500  \\ \hline
				& $m = 3$     & 0.593 & 0.407 & 0.571 & 0.429 & 0.667 & 2.667   & 6.000   \\
				\textit{CR}     & $m = 11$    & 0.544 & 0.456 & 0.542 & 0.458 & 0.346 & 9.825   & 11.553  \\
				& $m = 21$    & 0.531 & 0.469 & 0.531 & 0.469 & 0.248 & 19.126  & 20.368  \\ \hline\hline
				$p = \frac34$ & Best-of-$m$ & $Pr(A)$ & $Pr(B)$ & $Qr(A)$ & $Qr(B)$ & $Pr(T)$ & $EL(WB1)$ & $EL(WB2)$ \\ \hline\hline
				& $m = 3$     & 0.656 & 0.344 & 0.667 & 0.333 & 0.250 & 2.250   & 2.667   \\
				\textit{SR}     & $m = 11$    & 0.573 & 0.427 & 0.574 & 0.426 & 0.139 & 8.240   & 8.472   \\
				& $m = 21$    & 0.551 & 0.449 & 0.552 & 0.448 & 0.101 & 16.540  & 16.708  \\ \hline
				& $m = 3 $    & 0.656 & 0.344 & 0.600 & 0.400 & 0.750 & 2.750   & 8.000   \\
				\textit{CR}     & $m = 11$    & 0.573 & 0.427 & 0.567 & 0.433 & 0.418 & 10.080  & 13.006  \\
				& $m = 21$    & 0.551 & 0.449 & 0.550 & 0.450 & 0.303 & 19.513  & 21.631  \\ \hline
		\end{tabular}}
		\vspace{2mm}
		\caption{Probability ($Pr$) that $A$ Wins and $B$ Wins in Best-of-$m$, and ($Qr$) that $A$ Wins and $B$ Wins in Best-of-$m$ with a tiebreaker, and that the tiebreaker is implemented (T), and Expected Length of a game (\textit{EL}), for $p = \frac23$ and $\frac34$.}
		\label{table3}
	\end{table}
	
	All the variable-rule service sports mentioned in Section 1---except for racquetball, which uses Win-by-One---currently use \textit{SR} coupled with Win-by-Two.  The normal winning score of a game in squash is 11 (Best-of-21) and of badminton is 21 (Best-of-41), but a tiebreaker comes into play when there is a tie at 10-10 (squash) or 20-20 (badminton).\footnote{In badminton, if there is still a tie at 29-29, a single ``golden point" determines the winner (see www.bwfbadminton.org).}  In volleyball, the winning score is usually 25; in a tiebreaker it is the receiving team, not the serving team, that is advantaged, because it can set up spike as a return, which is usually successful \cite{schilling2009}.
	
	The winning score in racquetball is 15, but unlike the other variable-rule service sports, a player scores points only when he or she serves, which prolongs games if the server is unsuccessful.  This scoring rule was formerly used in badminton, squash, and volleyball, lengthening games in each sport by as much as a factor of two.  The rule was abandoned because this prolongation led to problems in tournaments and discouraged television coverage \cite[p. 103]{barrow2012}.  We are not sure why it persists in racquetball, and why this is apparently the only service sport to use Win-by-One.
	
	We believe that all the aforementioned variable-rule service sports would benefit from \textit{CR}.  Games would be extended and, hence, be more competitive, without significantly altering win probabilities. Specifically, \textit{CR} gives identical win probabilities to \textit{SR} under Win-by-One, and very similar win probabilities under Win-by-Two.
	
	\section{The Fixed Rules of Table Tennis and Tennis}
	
	Both table tennis and tennis use Win-by-Two, but neither uses \textit{SR}.  Instead, each uses a fixed rule.  In table tennis, the players alternate, each serving on two consecutive points, independent of the score and of who wins any point.  The winning score is 11 unless there is a 10-10 tie, in which case there is a Win-by-Two tiebreaker, in which the players alternate, but serve on one point instead of two.
	
	In table tennis, there seems to be little or no advantage to serving, so it does not matter much which player begins a tiebreaker. If $p = \frac12$, the playing field is level, so neither player gains a probabilistic advantage from being the first double server.
	
	Tennis is a different story.\footnote{Part of this section is adapted from \cite{brams2016}.}  It is generally acknowledged that servers have an advantage, perhaps ranging from about $p = \frac35$ to $p = \frac34$ in a professional match. In tennis, points are organized into games, games into sets, and sets into matches.  Win-by-Two applies to games and sets.  In the tiebreaker for sets, which occurs after a 6-6 tie in games, one player begins by serving once, after which the players alternate serving twice in a row.\footnote{Arguably, the tiebreaker creates a balance of forces: $A$ is advantaged by serving at the outset, but $B$ is then given a chance to catch up, and even move ahead, by next having two serves in a row. For a recent empirical study on this sequence, see \cite{cohen-zada2018}.}
	
	A tennis tiebreaker begins with one player---for us, $A$---serving.  Either $A$ wins the point or $B$ does.  Regardless of who wins the first point, there is then a fixed alternating sequence of double serves, $BBAABB\ldots$, for as long as is necessary (see the rule for winning in the next paragraph).  When $A$ starts, the entire sequence can be viewed as one of two alternating single serves, broken by the slashes shown below,
	\[ AB/BA/AB/BA \ldots.\]
	
	\noindent Between each pair of adjacent slashes, the order of $A$ and $B$ switches as one moves from left to right.  We call the serves between adjacent slashes a block.
	
	The first player to score 7 points, and win by a margin of at least two points, wins the tiebreaker.  Thus, if the players tie at 6-6, a score of 7-6 is not winning.  In this case, the tiebreaker would continue until one player goes ahead by two points (e.g., at 8-6, 9-7, etc.).
	
	Notice that, after reaching a 6-6 tie, a win by one player can occur only after the players have played an even number of points, which is at the end of an $AB$ or $BA$ block.  This ensures that a player can win only by winning twice in a block---once on the player's own serve, and once on the opponent's. The tiebreaker continues as long as the players continue to split blocks after a 6-6 tie, because a player can lead by two points only by winning both serves in a block.  When one player is finally ahead by two points, thereby winning the tiebreaker, the players must have had exactly the same number of serves.
	
	The fixed order of serving in the tennis tiebreaker, which is what precludes a player from winning simply because he or she had more serves, is not the only fixed rule that satisfies this property.  The strict alternation of single serves,
	\[ AB/AB/AB/AB \ldots, \]
	or what Brams and Taylor \cite{brams1999} call ``balanced alternation,"
	\[ AB/BA/BA/AB \ldots, \]
	are two of many alternating sequences that create adjacent $AB$ or $BA$ blocks.\footnote{Brams and Taylor \cite[p. 38]{brams1999} refer to balanced alternation as ``taking turns taking turns taking turns $\ldots$"  This sequence was proposed and analyzed by several scholars and is also known as the Prouhet--Thue--Morse (PTM) sequence \cite[pp. 82--85]{palacios2014}.  Notice that the tennis sequence maximizes the number of double repetitions when written as $A/BB/AA/BB/ \ldots$, because after the first serve by one player, there are alternating double serves by each player.  This minimizes changeover time and thus the ``jerkiness" of switching servers. }  All are fair---they ensure that the losing player does not lose only because he or she had fewer serves---for the same reason that the tennis sequence is fair.\footnote{It is true that if $A$ serves first in a block, he may win after serving one more time than $B$ if the tiebreaker does not go to a 6-6 tie. For example, assume that the score in the tiebreaker is 6-4 in favor of $A$, and it is $A$'s turn to serve.  If $A$ wins the tiebreaker at 7-4, he will have had one more serve than $B$.  However, his win did not depend on having one more serve, because he is now 3 points ahead.  By comparison, if the tiebreaker had gone to 6-6, $A$ could not win at 7-6 with one more serve; his win in this case would have to occur at the end of a block, when both players have had the same number of serves.}
	
	Variable-rule service sports, including badminton, squash, racquetball, and volleyball, do not necessarily equalize the number of times the two players or teams serve.  Under \textit{SR}, if $A$ holds his serve throughout a game or a tiebreaker, he can win by serving all the time.
	
	This cannot happen under \textit{CR}, because $A$ loses his serve when he wins; he can hold his serve only when he loses.  But this is not to say that $A$ cannot win by serving more often.  For example, in Best-of-3 \textit{CR}, $A$ can beat $B$ 2-1 by serving twice.  But the tiebreaker rule in tennis, or any other fixed rule in which there are alternating blocks of $AB$ and $BA$, does ensure that the winner did not benefit by having more serves.
	
	\section{Summary and Conclusions}
	
	We have analyzed four rules for service sports---including the Standard Rule (\textit{SR}) currently used in many service sports---that make serving variable, or dependent on a player's (or team's) previous performance.  The three new rules we analyzed all give a player, who loses a point or falls behind in a game, the opportunity to catch up by serving, which is advantageous in most service sports.
	The Catch-Up Rule (\textit{CR}) gives a player this opportunity if he or she just lost a point---instead of just winning a point, as under \textit{SR}.  Each of the Trailing Rules (\textit{TR}s) makes the server the player who trails; if there is a tie, the server is the player who previously was ahead (\textit{TRa}) or behind (\textit{TRb}).
	
	For Win-by-One, we showed that \textit{SR} and \textit{CR} give the players the same probability of a win, independent of the number of points needed to win.  We proved, and illustrated with numerical calculations, that the expected length of a game is greater under \textit{CR} than under \textit{SR}, rendering it more likely to stay close to the end.
	
	By contrast, the two \textit{TR}s give the player who was not the first server, $B$, a greater probability of winning, making it less likely that they would be acceptable to strong players, especially those who are used to \textit{SR} and have done well under it.  In addition, \textit{TRb} is not strategy-proof: We exhibited an instance in which a player can benefit by deliberately losing, which also makes it less appealing.
	
	We analyzed the effects of Win-by-Two, which most service sports currently combine with \textit{SR}, showing that it is compatible with \textit{CR}.  We showed that the expected length of a game, especially under \textit{CR}, is always greater under Win-by-Two than Win-by-One.
	
	On the other hand, Win-by-Two, compared with Win-by-One, has little effect on the probability of winning (compare $Qr(A)$ and $Qr(B)$ with $Pr(A)$ and $Pr(B)$ in Table 3).\footnote{\textit{CR} would make the penalty shootout of soccer fairer by giving the team that loses the coin toss---and which, therefore, usually must kick second in the shootout---an opportunity to kick first on some kicks \cite{brams2016}.}  The latter property should make \textit{CR} more acceptable to the powers-that-be in the different sports who, generally speaking, eschew radical changes that may have unpredictable consequences.  But they want to foster competitiveness in their sports, which \textit{CR} does.
	
	Table tennis and tennis use fixed rules for serving, which specify when the players serve and how many serves they have.  We focused on the tiebreaker in tennis, showing that it was fair in the sense of precluding a player from winning simply as a result of having served more than his or her opponent.  \textit{CR} does not offer this guarantee in variable-rule sports, although it does tend to equalize the number of times that each player serves.
	
	There is little doubt that suspense is created, which renders play more exciting and unpredictable, by making who serves next dependent on the success or failure of the server---rather than fixing in advance who serves and when.  But a sport can still generate keen competition, as the tennis tiebreaker does, even with a fixed service schedule. Thus, the rules of tennis, and in particular the tiebreaker, seem well-chosen; they create both fairness and suspense.
	
	\section{Appendix}
	
	\subsection{Theorem 1: \textit{SR} and \textit{CR} Give Equal Win Probabilities}
	We define a serving schedule for a Best-of-$(2k+1)$ game to be
	\[(A_1, \ldots, A_{k + 1}, B_1, \ldots, B_k) \in \{W, L\}^{2k+1},\]
	where for $i = 1, 2, \ldots, k+1$, $A_i$ records the result of $A$'s $i$th serve, with $W$ representing a win for $A$ and $L$ a loss for $A$, and for $j = 1, 2, \ldots, k, B_j$ represents the result of $B$'s $j$th serve, where now $W$ represents a win for $B$ and $L$ represents a loss for $B$.
	
	The order in which random variables are determined does not influence the result of a game.  So we may as well let $A$ and $B$ determine their service schedule in advance, and then play out their game under \textit{SR} or \textit{CR}, using the predetermined result of each serve.
	
	The auxiliary rule, \textit{AR}, is a serving rule for a Best-of-$(2k+1)$ game in which $A$ serves $k+1$ times consecutively, and then $B$ serves $k$ times consecutively. For convenience, we can assume that \textit{AR} continues for $2k+1$ serves, even after one player accumulates $k+1$ points. The basis of our proof is a demonstration that, if the serving schedule is fixed, then games played under all three serving rules, \textit{AR}, \textit{SR}, and \textit{CR}, are won by the same player.
	
	\begin{theorem}
		Let $k \geq 1$.  In a Best-of-$(2k+1)$ game, $Pr_{SR}(A) = Pr_{CR}(A)$.
	\end{theorem}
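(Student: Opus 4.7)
The plan is to follow the hint in the excerpt and argue through the auxiliary rule \emph{AR}, under which $A$ serves the first $k+1$ rounds and $B$ the last $k$. I would interpret the randomness as a service schedule $s = (A_1, \ldots, A_{k+1}, B_1, \ldots, B_k) \in \{W, L\}^{2k+1}$ committed in advance, with each $A_i$ an independent Bernoulli($p$) and each $B_j$ an independent Bernoulli($q$). Under any of the three rules \emph{AR}, \emph{SR}, \emph{CR}, reading $A$'s results from $A_1, A_2, \ldots$ in order when $A$ serves, and $B$'s results from $B_1, B_2, \ldots$ in order when $B$ serves, produces the same joint distribution as honest independent play. Hence it suffices to show that, for every fixed $s$, all three rules declare the same winner.

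Under \emph{AR} the entire schedule is played, and $A$ wins if and only if the A-score of $s$---the number of $W$'s among $A_1, \ldots, A_{k+1}$ plus the number of $L$'s among $B_1, \ldots, B_k$---is at least $k+1$. The heart of the proof is a lemma stating that playing $s$ under \emph{SR} (respectively \emph{CR}) terminates without demanding any entry beyond the $k+1$ $A$-slots and $k$ $B$-slots, and ends with the same winner as \emph{AR}. I would establish this in two sub-steps. First, \emph{SR} and \emph{CR} never run off the end of either player's schedule: under \emph{SR} a re-serve requires a previous win, while under \emph{CR} a re-serve requires a previous loss, and a short accounting in each case shows that whichever player would exhaust her schedule first must already have accumulated $k+1$ points, so the game has already ended. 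Second, the identity of the winner is invariant under any legal reordering of the remaining serves.

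The invariance I would prove by induction on the number of rounds remaining. Given any interim state with scores $(x, y)$ and schedule tails $(A_i, A_{i+1}, \ldots)$ and $(B_j, B_{j+1}, \ldots)$, each pending entry is already committed to contribute a fixed $+1$ to a fixed player regardless of when it is drawn, so swapping the order of two adjacent pending serves cannot change the eventual winner. Chaining such swaps morphs any dynamic \emph{SR} or \emph{CR} ordering into the \emph{AR} ordering, yielding the invariance.

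The main obstacle is reconciling the early termination of \emph{SR} and \emph{CR} with the full play-out of \emph{AR}: some entries of $s$ remain unread under \emph{SR} or \emph{CR}, as the Best-of-3 computations in the excerpt already illustrate. The exchange argument must verify that unread entries cannot swing the comparison of A-score to $k+1$; equivalently, once \emph{SR} or \emph{CR} has declared a winner, continuing to read the remaining entries in \emph{AR} order preserves that winner across the $k+1$ threshold. Once the lemma is in hand, Theorem 1 follows by taking expectations over the schedule: $Pr_{SR}(A) = Pr_{CR}(A) = Pr_{AR}(A) = \mathbb{P}(\text{A-score}(s) \geq k+1)$.
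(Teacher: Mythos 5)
Your proposal is correct and takes essentially the same approach as the paper's proof: commit the service schedule in advance, compare \emph{SR} and \emph{CR} with the auxiliary rule \emph{AR}, show that every fixed schedule yields the same winner under all three rules, and then sum schedule probabilities. One small repair: the adjacent-swap reordering machinery is not needed (and does not literally apply, since under early termination the rules play different subsets of the schedule in an endogenously determined order)---your closing threshold observation already suffices, provided the no-overrun accounting is stated carefully: under \emph{SR} the player about to exceed his or her allotment has already scored $k+1$ points, but under \emph{CR} it is that player's \emph{opponent} who has, whereas the paper instead finishes by counting server losses $a$ and $b$ in the schedule and showing that each rule awards the win to $A$ exactly when $b \geq a$.
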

	
	\begin{proof}
		Suppose we have already demonstrated that, for any serving schedule, the three service rules---\textit{AR}, \textit{SR}, and \textit{CR}---all give the same winner.  It follows that the subset of service schedules under which $A$ wins under \textit{SR} must be identical to the subset of service schedules under which $A$ wins under \textit{CR}.  Moreover, regardless of the serving rule, any serving schedule that contains $n$ wins for $A$ as server and $m$ wins for $B$ as server must be associated with the probability $p^n(1-p)^{k+1-n}q^m(1-q)^{k-m}$.%
		\footnote{What really matters is that any serving schedule is associated with some probability. For example, as discussed in footnote 4, if  $A$ wins his $i$th serve with probability $p_i$, for $i=1,2,\ldots$, and that $B$ wins her $j$th serve with probability $q_j$, for $j=1,2,\ldots$, then the formula would be more complicated, but the probability of choosing a particular service schedule would still be independent of the order of serves. Therefore, the proof would apply to the general model described in footnote 4.}
		Because the probability that a player wins under a service rule must equal the sum of the probabilities of all the service schedules in which the player wins under that rule, the proof of the theorem will be complete.
		
		Fix a serving schedule $(A_1, \ldots, A_{k + 1}, B_1, \ldots, B_k) \in \{W, L\}^{2k+1}$.  Let $a$ be the number of $A$'s server losses and $b$ be the number of $B$'s server losses. (For example, for $k = 2$ and serving schedule $(W, L, L)$, $a = 1$ and $b = 1$.)
		
		Under service rule \textit{AR}, there are $2k+1$ serves in total. Then $A$ must accumulate $k+1+b-a$ points and $B$ must accumulate $k-b+a$ points.  Clearly, $A$ has strictly more points than $B$ if and only if $b \geq a$.
		
		Now consider service rule \textit{SR}, under which service switches whenever the server loses.  First we prove that, if $b \geq a$, $A$ will have the opportunity to serve at least $k+1$ times. $A$ serves until he loses, and then $B$ serves until she loses, so immediately after $B$'s first loss, $A$ has also lost once and is to serve next.  Repeating, immediately after $B$'s $a$th loss, $A$ has also lost $a$ times and is to serve next.  Either $A$'s prior loss was his $(k+1)$st serve, or $A$ wins every serve from this point on, including $A$'s $(k+1)$st serve.  After this serve, $A$ has $k+1-a+b \geq k+1$ points, so $A$ must have won either on this serve or earlier.
		
		Now suppose that $a > b$ under service rule \textit{SR}.  Then, after $B$'s $b$th loss, $A$ has also lost $b$ times and is to serve.  Moreover, $A$ must lose again (since $b < a$), and then $B$ becomes server and continues to serve (without losing) until $B$ has served $k$ times.  By then, $B$ will have gained $k-b+a \geq k+1$ points, so $B$ will have won. In conclusion, under \textit{SR}, $A$ wins if and only if $b \geq a$.
		
		Consider now service rule \textit{CR}, under which $A$ serves until he wins, then $B$ serves until she wins, etc. Clearly, $B$ cannot win a point on her own serve until after $A$ has won a point on his own serve.  Repeating, if $B$ has just won a point on serve, then $A$ must have already won the same number of points on serve as $B$.
		
		Again assume that $b \geq a$.  Note that $A$ has $k+1-a$ server wins in his first $k+1$ serves, $B$ has $k-b$ server wins in her first $k$ serves, and $k+1-a > k-b$.  Consider the situation under \textit{CR} immediately after $B$'s $(k-b)$th server win. (All of $B$'s serves after this point up to and including the $k$th serve must be losses.)  At this point, both $A$ and $B$ have $k-b$ server wins.  Suppose that $A$ also has $a^\prime \leq a$ server losses, and that $B$ has $b^\prime \leq  b$ server losses.  Then the score at this point must be $(k-b+b^\prime, k-b+a^\prime)$.
		
		Immediately after $B$'s $(k-b)$th server win, $A$ is on serve.  For $A$, the schedule up to $A$'s $(k+1)$st serve must contain $a-a^\prime$ server losses and $k+1-a-(k-b) = b-a+1 > 0$ server wins.  Immediately after $A$'s next server win, which is $A$'s $(k-b+1)$st server win, $A$'s score is $k-b+b^\prime +1$, and $B$'s score is at most $k-b+a^\prime + (a-a^\prime) = k-b+a  \leq  k$.  Then $B$ is on serve, and all of $B$'s $b-b^\prime$ remaining serves (up to and including $B$'s $k$th serve) must be server losses.  Therefore, after $B$'s $k$th serve, $A$'s score is $k-b+b^\prime+1+ (b-b^\prime) = k+1$, and $A$ wins.
		
		If $b <a$, an analogous argument shows that $B$ wins under \textit{CR}.  This completes the proof that, under any serving schedule, the winner under \textit{AR} is the same as the winner under \textit{SR} and under \textit{CR}.
	\end{proof}

	\subsection{Theorem 2: Expected Lengths under \textit{SR} and \textit{CR}}
	\begin{lemma}
		Let $1 \leq t \leq s \leq r$. If a subset of $s$ dots is selected uniformly from a sequence of $r$ dots, the expected position of the $t$th selected dot in the full sequence is
		\[ t\left(\frac{r+1}{s+1}\right). \]
	\end{lemma}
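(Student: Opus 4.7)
The plan is to exploit the $s+1$ gaps that the $s$ selected dots carve out of the $r$-dot sequence and to argue by symmetry among those gaps. Write the selected positions as $Y_1 < Y_2 < \cdots < Y_s$ and define the gap variables $G_0 = Y_1 - 1$, $G_i = Y_{i+1} - Y_i - 1$ for $1 \le i \le s-1$, and $G_s = r - Y_s$. These are nonnegative integers with $G_0 + G_1 + \cdots + G_s = r - s$, and the map $(Y_1,\ldots,Y_s) \mapsto (G_0,\ldots,G_s)$ is a bijection between $s$-element subsets of $\{1,\ldots,r\}$ and compositions of $r-s$ into $s+1$ nonnegative parts --- the standard stars-and-bars correspondence. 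Hence the uniform distribution on subsets pushes forward to the uniform distribution on such compositions.

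Next, I would establish that $G_0, G_1, \ldots, G_s$ are identically distributed. By stars and bars, for each index $i$ and each $k$ with $0 \le k \le r-s$, the number of compositions with $G_i = k$ equals $\binom{r - k - 1}{s - 1}$ (the remaining $s$ parts must be a nonnegative composition of $r - s - k$), and this count does not depend on $i$. Thus all $G_i$ share a common marginal distribution and in particular a common expectation, and since $\sum_{i=0}^{s} G_i = r - s$ deterministically, linearity of expectation forces
\[ E[G_i] = \frac{r-s}{s+1} \quad \text{for each } i = 0, 1, \ldots, s. \]

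Finally, I would invoke the identity $Y_t = t + (G_0 + G_1 + \cdots + G_{t-1})$, where the $+t$ accounts for the $t$ selected dots themselves up through position $Y_t$. Taking expectations and using the common value of $E[G_i]$,
\[ E[Y_t] = t + t\cdot\frac{r-s}{s+1} = t\cdot\frac{(r-s) + (s+1)}{s+1} = t\cdot\frac{r+1}{s+1}, \]
which is the claimed formula.

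I do not foresee a serious obstacle; the only point that needs care is verifying that the uniform measure on $s$-subsets transfers to the uniform measure on compositions, because it is this fact that lets the symmetry in the composition picture translate into equal expected gap sizes. Once that is in hand, the rest is bookkeeping. A purely combinatorial alternative would be to evaluate $E[Y_t] = \binom{r}{s}^{-1} \sum_{y=t}^{r-s+t} y \binom{y-1}{t-1}\binom{r-y}{s-t}$ directly using the hockey-stick identity, but the gap-symmetry argument above avoids that computation entirely.
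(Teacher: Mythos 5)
Your proof is correct, but it runs along a different track from the paper's. You decompose the position of the $t$th selected dot as $Y_t = t + (G_0 + \cdots + G_{t-1})$, where the $G_i$ are the $s+1$ gaps, transfer the uniform measure on $s$-subsets to the uniform measure on compositions of $r-s$ into $s+1$ nonnegative parts via stars and bars, and then get $E[G_i] = \frac{r-s}{s+1}$ from the equidistribution of the gaps plus the deterministic constraint $\sum_i G_i = r-s$; your counting check that the number of compositions with $G_i = k$ is $\binom{r-k-1}{s-1}$ independently of $i$ is exactly the verification you flagged as the delicate point, and it is right. The paper instead attaches an indicator $X_i$ to each \emph{nonselected} dot $i$, with $X_i = 1$ when dot $i$ precedes the $t$th selected dot, and observes that among the $s+1$ dots consisting of dot $i$ and the $s$ selected dots, dot $i$ is equally likely to occupy any rank, so $P(X_i = 1) = \frac{t}{s+1}$; summing over the $r-s$ nonselected dots gives $(r-s)\frac{t}{s+1} + t$. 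The two arguments compute the same quantity---the expected number of nonselected dots before $Y_t$ is exactly $E[G_0 + \cdots + G_{t-1}]$---but the paper's per-dot symmetry avoids any appeal to the composition bijection and is a one-line symmetry observation, while your gap-exchangeability route is slightly longer yet yields more (the common marginal law of every gap, not just its mean), and generalizes naturally to expectations of any $Y_t - Y_{t'}$.
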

	
	\begin{proof}
		If $i$ is any nonselected dot, let $X_i$ be an indicator variable such that $X_i = 1$ if dot $i$ precedes the $t$th selected dot, and $X_i = 0$ otherwise. Then $E[X_i] = P(X_i = 1)$ and the expected position of the $t$th dot is $E[\sum_i X_i ] + t$, where the sum is taken over all nonselected dots.
		
		Now $P(X_i = 1)$ depends only on the position of dot $i$ and the $s$ selected dots; considering only these $s + 1$ dots, $X_i = 1$ if and only if dot $i$ is in the first, second, $\ldots$, or $t$th position. In other words, for any nonselected dot $i$, $P(X_i = 1) = \frac {t}{s+1}$. Therefore, the expected position of the $t$th dot is $(r - s)\frac {t}{s+1} + t = t \frac{r + 1}{s+1}$, as required.
	\end{proof}
	
	\begin{theorem}
		If $0 < p < 1$, $0 < q < 1$, and $k\geq 1$, then the expected length of Best-of-$(2k+1)$ game under \textit{CR} is greater than under \textit{SR} if and only if $p+q>1$.
	\end{theorem}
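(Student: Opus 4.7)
The plan is to condition on the serving schedule (as in Theorem~1), use Lemma~1 to compute conditional expected game lengths, and then extract the factor $p+q-1$ from the resulting double sum.

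First, for a fixed schedule $(A_1,\ldots,A_{k+1},B_1,\ldots,B_k)$ with $a$ losses for $A$ at positions $p_1<\cdots<p_a$ and $b$ losses for $B$ at positions $q_1<\cdots<q_b$, I would derive closed-form lengths by tracing the dynamics block by block. Under \textit{SR} one obtains $L_{SR}=k+1+q_a$ when $b\geq a$ (with $q_0:=0$) and $L_{SR}=k+p_{b+1}$ when $b<a$; under \textit{CR}, the analogous trace gives $L_{CR}=k+w^A_{k-b+1}$ when $b\geq a$ (with $w^A_0:=0$) and $L_{CR}=k+1+w^B_{k+1-a}$ when $b<a$, where $w^A_i$ (resp.\ $w^B_j$) denotes the position of $A$'s $i$th (resp.\ $B$'s $j$th) win. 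These formulas are consistent with the Theorem~1 dichotomy that $A$ wins exactly when $b\geq a$, and they match Table~1 in the case $k=1$.

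Next, condition on the counts $(a,b)$. The positions of losses (equivalently wins) are then uniformly distributed, so Lemma~1 yields
\[ E[q_a\mid b]=\frac{a(k+1)}{b+1}, \quad E[p_{b+1}\mid a]=\frac{(b+1)(k+2)}{a+1}, \quad E[w^A_{k-b+1}\mid a]=\frac{(k-b+1)(k+2)}{k+2-a}, \quad E[w^B_{k+1-a}\mid b]=\frac{(k+1-a)(k+1)}{k+1-b}. \]
Weighting by $\binom{k+1}{a}\binom{k}{b}p^{k+1-a}(1-p)^a q^{k-b}(1-q)^b$ and summing over $(a,b)$, split at $b\geq a$ versus $b<a$, produces an explicit polynomial double sum for $E[L_{CR}]-E[L_{SR}]$.

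The hard part, and the main obstacle, is showing that this polynomial has the same sign as $p+q-1$. A useful lever is a duality: interchanging $W$ and $L$ throughout a schedule swaps \textit{SR} and \textit{CR} dynamics while transforming the probability weight by $(p,q)\mapsto(1-p,1-q)$, giving $E_{p,q}[L_{SR}]=E_{1-p,1-q}[L_{CR}]$; the claim then becomes $E_{p,q}[L_{CR}]>E_{1-p,1-q}[L_{CR}]$ iff $p+q>1$. Since $(p,q)$ and $(1-p,1-q)$ are symmetric about $(1/2,1/2)$, this would follow from strict positivity of the directional derivative of $E[L_{CR}]$ along $(1,1)$ on the open unit square. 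As the authors remark, no slick simplification is available, so completing this step appears to require careful term-by-term manipulation of the sum---for instance, pairing each schedule with its dual to extract the factor $(p+q-1)$ explicitly from the remaining polynomial.
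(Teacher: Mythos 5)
Your setup coincides with the paper's: fix the serving schedule, use the Theorem~1 fact that $A$ wins iff $b\geq a$, and apply Lemma~1 to get the conditional expected lengths. Your four length formulas and their conditional expectations are correct and agree with the paper's (which conditions on server-win counts $(n,m)=(k+1-a,\,k-b)$ rather than loss counts). Your W/L-flip duality $E_{p,q}[L_{SR}]=E_{1-p,1-q}[L_{CR}]$ is also correct and is a genuinely nice observation: under \textit{CR} with the flipped schedule the serve passes at exactly the same per-server positions as under \textit{SR} with the original schedule, and the score trajectories are swaps of each other, so the stopping time is unchanged; in particular this yields the equality of expected lengths at $p+q=1$ for free. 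But the content of the theorem is the sign comparison, and that is exactly where the proposal stops. You reduce the claim to positivity of the directional derivative of $E_{p,q}[L_{CR}]$ along $(1,1)$ on the open unit square, which you verify only implicitly for $k=1$ and do not prove in general; it is not obviously easier than the original statement (note $E[L_{CR}]$ is not symmetric in $p$ and $q$, so no symmetry shortcut is available), and your closing sentence concedes that the required ``careful term-by-term manipulation'' remains to be done. As written, there is a genuine gap at the crux of the argument.

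For comparison, the paper closes this gap combinatorially rather than analytically: it groups the difference $EL_{CR}-EL_{SR}$ by $(n,m)$, rewrites the bracketed factors using identities such as $\binom{k}{m}\frac{m}{k+1-m}=\binom{k}{m-1}$ and $\binom{k+1}{n}\frac{k+1-n}{n+1}=\binom{k+1}{n+1}$, and then shifts indices $(n,m)\mapsto(n-1,m-1)$ in two of the four resulting sums. The shift multiplies the schedule probability by $\frac{(1-p)(1-q)}{pq}$ and makes those two sums match the other two, so the entire difference factors as $\bigl(1-\frac{(1-p)(1-q)}{pq}\bigr)$ times a sum of positive terms, and this common factor equals $\frac{p+q-1}{pq}$, giving the ``if and only if.'' Your ``pair each schedule with its dual'' suggestion is morally this index shift, but carrying it out explicitly is precisely the missing work; if you execute that pairing (or reproduce the binomial-identity manipulation), your argument becomes essentially the paper's proof, with the duality as an attractive way to motivate it.
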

	
	\begin{proof}
		
		As defined earlier, a service schedule is a sequence of exactly $k+1$ wins and losses on $A$'s serves and exactly $k$ wins and losses on $B$'s serves. A service schedule has parameters $(n,m)$ if it contains $n$ server wins for $A$ and $m$ server wins for $B$. The probability of any particular service schedule with parameters $(n,m)$ is
		\[ Pr_0(n,m) = p^n (1-p)^{k+1-n} q^m (1-q)^{k-m}.\]
		The probability that \textit{some} service schedule with parameters $(n,m)$ occurs is
		\[ Pr(n,m) = \binom{k+1}{n} \binom{k}{m} p^n (1-p)^{k+1-n} q^m (1-q)^{k-m}. \]
		
		Consider a service schedule with parameters $(n,m)$. Using \textit{SR}, if $n>m$, then $A$ wins and exhausts his part of the service schedule.  $B$ uses her part of the schedule through her $(k+1-n)$th loss. From Lemma 1 with $r=k$, $s=k-m$, $t=k+1-n$, the expected length of the game is
		\begin{align*}
		EL_{SR}^0(n,m) &= k+1 + (k+1-n)\left( \frac{k+1}{k+1-m} \right) \\
		&= 2(k+1)-(n-m) \left( \frac{m}{k+1-m}+1 \right).
		\end{align*}
		\noindent However, if $n\leq m$, then $B$ wins and exhausts her part of the service schedule, while $A$ uses his part of the schedule through his $(k+1-m)$th loss. From Lemma 1 with $r=k+1$, $s=k+1-n$, $t=k+1-m$, the expected length of the game is
		\begin{align*}
		EL_{SR}^0(n,m) &= k + (k+1-m)\left(\frac{k+1+1}{k+1-n+1}\right)\\
		&= 2(k+1)-(m-n+1) \left( \frac{n}{k+1-n+1}+1 \right).
		\end{align*}
		
		Using \textit{CR}, if $n>m$, then $A$ wins, $B$ exhausts her part of the service schedule, and $A$ uses his schedule through his $(m+1)$st win. From Lemma 1 with $r=k+1$, $s=n$, $t=m+1$, the expected length of the game is
		\begin{align}
		EL_{CR}^0(n,m) &= k + (m+1)\left(\frac{k+1+1}{n+1}\right) \notag\\
		&= 2(k+1)-(n-m) \left(\frac{k+1-n}{n+1}+1 \right). \notag
		\end{align}
		But if $n\leq m$, then $B$ wins, $A$ exhausts his part of the service schedule, and $B$ uses her part through her $n$th win. From Lemma 1 with $r=k$, $s=m$, $t=n$, the expected length of the game is
		\begin{align*}
		EL_{CR}^0(n,m) &= k+1 + n \left( \frac{k+1}{m+1} \right)\\
		&= 2(k+1)-(m-n+1) \left( \frac{k-m}{m+1}+1 \right).
		\end{align*}
		
		Combining the first two formulas, we can write the expected length of a game played under \textit{SR}, \footnote{We take the sum $\sum_{n>m}$ to include terms for every integer pair $(n,m)$ with $n>m$.  If $n>k+1$ or $m>k$, we take $Pr(n,m)=0$, so the sum includes only finitely many nonzero terms.  Similarly, the sum $\sum_{n\leq m}$ includes terms for every integer pair $(n,m)$ with $n\leq m$.  In general, we interpret the binomial coefficient $\binom{a}{b}$ to be zero whenever $b<0$ or $b>a$.}
		\begin{multline*}
		EL_{SR} = \sum_{n>m} Pr(n,m) \left[ 2(k+1)-(n-m)\left(\frac{m}{k+1-m}+1\right) \right] \\
		+ \sum_{n\leq m} Pr(n,m) \left[ 2(k+1)-(m-n+1)\left(\frac{n}{k+1-n+1}+1\right) \right],
		\end{multline*}
		and the expected length of a game played under \textit{CR},
		\begin{multline*}
		EL_{CR} = \sum_{n>m} Pr(n,m) \left[ 2(k+1)-(n-m)\left(\frac{k+1-n}{n+1}+1\right) \right] \\
		+\sum_{n\leq m} Pr(n,m) \left[ 2(k+1)-(m-n+1)\left(\frac{k-m}{m+1}+1\right) \right].
		\end{multline*}
		When we subtract these expressions, many terms cancel:
		\begin{align*}
		EL_{CR}-EL_{SR}  &= \sum_{n>m} Pr(n,m) (n-m) \left[\frac{m}{k+1-m}-\frac{k+1-n}{n+1}\right]\\
		&~~+ \sum_{n\leq m} Pr(n,m) (m-n+1) \left[ \frac{n}{k+1-n+1}-\frac{k-m}{m+1} \right].
		\end{align*}
		Our objective now is to show that this quantity, the expected number of points by which the length of a \textit{CR} game exceeds the length of an \textit{SR} game, has the same sign as $p+q-1$.
		
		To simplify this expected difference in lengths, we extract the binomial coefficients $\binom{k+1}{n}$ and $\binom{k}{m}$ from the probability factors and use the following identities:
		\begin{align*}
		&\binom{k}{m} \cdot \frac{m}{k+1-m}     = \binom{k}{m-1}; &\binom{k+1}{n}   \cdot \frac{k+1-n}{n+1}   = \binom{k+1}{n+1}; \\
		&\binom{k+1}{n}   \cdot \frac{n}{k+1-n+1}   = \binom{k+1}{n-1}; &\binom{k}{m} \cdot \frac{k-m}{m+1} = \binom{k}{m+1}.
		\end{align*}
		The result is
		\begin{align*}
		&EL_{CR}-EL_{SR}\\
		&~= \sum_{n>m} Pr_0(n,m) (n-m) \left[\binom{k+1}{n}\binom{k}{m-1}-\binom{k+1}{n+1}\binom{k}{m}\right] \\ 
		&~~~+\sum_{n\leq m} Pr_0(n,m) (m-n+1) \left[ \binom{k+1}{n-1}\binom{k}{m}-\binom{k+1}{n}\binom{k}{m+1} \right].
		\end{align*}
		Now split the sums to obtain
		\begin{align*}
		EL_{CR}-EL_{SR} =& \sum_{n>m} Pr_0(n,m) (n-m) \binom{k+1}{n}\binom{k}{m-1} \\
		&- \sum_{n>m} Pr_0(n,m) (n-m)\binom{k+1}{n+1}\binom{k}{m} \\
		&~+ \sum_{n\leq m} Pr_0(n,m) (m-n+1) \binom{k+1}{n-1}\binom{k}{m} \\
		&~~-\sum_{n\leq m} Pr_0(n,m) (m-n+1) \binom{k+1}{n}\binom{k}{m+1}.
		\end{align*}
		
		To analyze the expression for $EL_{CR}-EL_{SR}$, we first shift indices in the second summation by  replacing $n$ by $n-1$ and $m$ by $m-1$ throughout. Of course, the condition $n>m$ and the factor $n-m$ are not affected. Then
		\begin{align*}
		&\sum_{n>m} Pr_0(n,m)(n-m)\binom{k+1}{n+1}\binom{k}{m} \\
		&~= \sum_{n>m} Pr_0(n-1,m-1) (n-m) \binom{k+1}{n}\binom{k}{m-1}\\
		&~= \frac{1-p}{p}\,\frac{1-q}{q} \sum_{n>m} Pr_0(n,m) (n-m)\binom{k+1}{n}\binom{k}{m-1} ,\\
		\end{align*}
		
		\vspace{-0.3 in}
		
		\noindent where in the last step we have used the relationship
		\[ Pr_0(n-1,m-1)= \frac{1-p}{p}\,\frac{1-q}{q}\ Pr_0(n,m)\]
		to make the second summation match the first. Similarly, we work on the fourth summation, shifting the indices in the same way---replacing $n$ by $n-1$ and $m$ by $m-1$ throughout. Again, the condition $n\leq m$ and the factor $m-n+1$ are not affected, and in the end the fourth summation matches the third.
		\begin{align*}
		& \sum_{n\leq m} Pr_0(n,m)(m-n+1) \binom{k+1}{n}\binom{k}{m+1}\\
		&~=  \sum_{n\leq m} Pr_0(n-1,m-1) (m-n+1) \binom{k+1}{n-1}\binom{k}{m}\\
		&~= \frac{1-p}{p}\,\frac{1-q}{q} \sum_{n\leq m} Pr_0(n,m) (m-n+1) \binom{k+1}{n-1}\binom{k}{m}.
		\end{align*}
		
		Incorporating these changes into our expression for $EL_{CR}-EL_{SR}$ gives
		\begin{align*}
		&EL_{CR}-EL_{SR} \\
		&~= \left(1-\frac{1-p}{p}\,\frac{1-q}{q}\right) \sum_{n>m} Pr_0(n,m) (n-m) \binom{k+1}{n}\binom{k}{m-1}\\
		&~~~+ \left(1-\frac{1-p}{p}\,\frac{1-q}{q}\right) \sum_{n\leq m} Pr_0(n,m) (m-n+1)  \binom{k+1}{n-1}\binom{k}{m}.
		\end{align*}
		This completes the proof, because the sums include only positive terms, and the common factor is
		\[
		1-\frac{1-p}{p}\,\frac{1-q}{q} = \frac{p+q-1}{pq},
		\]
		which has the same sign as $p+q-1$.
	\end{proof}
	
	\subsection{Theorem 3: Strategy-proofness of \textit{TRa}}
	\begin{theorem}
		\textit{TRb} is strategy-vulnerable, whereas \textit{SR} and \textit{CR} are strategy-proof. \textit{TRa} is strategy-proof whenever $p+q>1$.
	\end{theorem}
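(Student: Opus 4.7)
The plan is to adapt the state-recursion argument used in the main text for \textit{CR}, with additional bookkeeping to handle the history dependence of \textit{TRa}. Define a state as $(s,x,y)$, where $s \in \{A,B\}$ is the current server and $(x,y)$ are the running scores, augmented by a ``last leader'' flag when $x=y$; under \textit{TRa} the server is determined either by $(x,y)$ (non-tied case) or by the pre-tie flag (tied case). Let $W(s,x,y)$ denote $A$'s probability of winning from that state. The goal is to establish simultaneously, by backward induction on the remaining game length (terminal states first), two properties: \emph{(i) monotonicity} --- $W$ is nondecreasing in $x$ and nonincreasing in $y$ on \textit{TRa}-reachable states; and \emph{(ii) strategy-proofness} --- whenever $A$ serves, the state reached after $A$ wins has $W$-value at least that of the state reached after $A$ loses, and symmetrically with the roles of $A$ and $B$ swapped.

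Under \textit{TRa}, $A$ is on serve only when he trails or when the score is tied with $B$ having led pre-tie. The case analysis at a state where $A$ serves splits as follows: (a) $A$ trails by two or more, so both the ``$A$ wins'' and ``$A$ loses'' successors still have $A$ serving, and (ii) reduces to (i); (b) $A$ trails by exactly one, so an $A$-win creates a tie and transfers service to $B$, whereas an $A$-loss extends the gap with $A$ still on serve; and (c) the score is tied with $A$ serving, so an $A$-win transfers both the lead and the service to $B$, whereas an $A$-loss transfers only the lead to $B$ while $A$ keeps the serve. Cases (b) and (c) each require an inequality of the form ``$A$ ahead by one with $B$ serving has $W$-value at least that of $A$ behind by one with $A$ serving,'' namely $W(B,y,y)\ge W(A,y-1,y+1)$ and $W(B,y+1,y)\ge W(A,y,y+1)$.

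The main obstacle is precisely cases (b) and (c), since each compares two successor states whose servers are swapped --- and this is where the hypothesis $p+q>1$ must enter. The \textit{CR} argument was spared this difficulty because \textit{CR}'s winner-loses-serve rule made monotonicity alone sufficient; here we additionally need that ``surrendering serve but gaining a point'' weakly dominates ``retaining serve but losing a point,'' which is exactly the content of $p+q>1$ (serving is advantageous). To make this rigorous I would pursue a coupling argument in the spirit of the service-schedule reduction used for Theorem~1: play the two candidate subgames side by side against a shared random source and show, step by step, that the ``$A$ wins'' trajectory remains weakly ahead in $W$-value, with $p+q>1$ guaranteeing that no later server swap can reverse the dominance. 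A more elementary alternative is to layer a secondary induction that proves the two inequalities above directly, unfolding each one via the one-step recursion for $W$ into shallower-horizon instances of the same form until reaching a base case where one player is already at match point and both sides can be computed explicitly; the factor $p+q-1$ should then emerge with the correct sign.
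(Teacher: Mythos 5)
You have correctly isolated the crux, and your skeleton matches the paper's appendix argument: a backward induction over states in which the only hard cases are the swapped-server comparisons $W(B,y,y)\ge W(A,y-1,y+1)$ and $W(B,y+1,y)\ge W(A,y,y+1)$, which is exactly where $p+q>1$ must enter. But at that point the proposal stops: neither the coupling sketch nor the ``unfold to a base case'' induction is carried out, and your stated invariant (i) --- $W$ nondecreasing in $x$ and nonincreasing in $y$ --- cannot close the argument, because the inequalities you need compare states with the \emph{same} total score but different servers (and, at ties, the same score pair with different servers), which are not instances of coordinatewise monotonicity. The paper supplies precisely the two missing ingredients: first, a strengthened invariant --- $W_A$ is strictly decreasing along each row of constant total score $x+y$ in the probability tree, with the two tied states ordered so that $(A,t,t)$ (meaning $A$ was ahead pre-tie) lies to the left of $(B,t,t)$ --- proved by backward induction on rows; and second, a one-line identity (its Lemma 4), $W_A(A,x,x)-W_A(B,x,x)=(p+q-1)\left[W_A(B,x+1,x)-W_A(A,x,x+1)\right]$, which converts the tied-state comparison into the off-by-one comparison one row lower and is the sole place where $p+q>1$ is used. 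Without this identity or an equivalent, your secondary induction has no well-founded structure to recurse on, and the coupling idea is shaky because the two trajectories to be coupled have different servers at each step, so a shared random source in the style of Theorem 1's service schedules does not obviously preserve dominance under TRa's history-dependent switching.

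Two further points. Your statement of TRa's tie rule is inverted: under TRa the server at a tie is the player who was \emph{ahead} before the tie, so $A$ serves at a tie only if $A$ led pre-tie; what you wrote (``tied with $B$ having led pre-tie'') is TRb. Your case (b) transition (an $A$-win from one point behind hands the serve to $B$) is nonetheless the correct TRa move, so this is a local slip, but it must be corrected, since distinguishing $(A,t,t)$ from $(B,t,t)$ by the pre-tie leader is exactly what the induction invariant tracks. Finally, the theorem also asserts that TRb is strategy-vulnerable and that SR and CR are strategy-proof; deferring SR and CR to the main-text recursion is fine, but the TRb claim is never addressed --- the paper proves it by an explicit Best-of-3 computation showing that deliberately losing the first point benefits $A$ exactly when $p^2+q^2>1$.
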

	
	\vspace{2mm}
	See Section 2 for the proof that \textit{TRb} is strategy-vulnerable and that \textit{SR} and \textit{CR} are strategy-proof. Here we consider only a Best-of-$(2k+1)$ game played under \textit{TRa}, and use notation similar to the text: $(C, x, y)$ denotes a state in which player $C$ ($C =A \text{ or } B$) is about to serve, $A$'s current score is $x$, and $B$'s is $y$. Let $W_{A}(C, x, y)$ denote $A$'s conditional win probability given that the game reaches state $(C, x, y)$. Assume that $p + q > 1$.
	
	Given any state, call the state that was its immediate predecessor its \emph{parent}. Call two states \emph{siblings} if they have the same parent. For example, state $(A, 0, 0)$ is the parent of siblings $(B, 1, 0)$ and $(A, 0, 1)$. Let $0 < x \leq k + 1$ and $0 \leq y \leq k + 1$ with $x + y \leq 2k + 1$. Then any state $(C, x, y)$ must have a sibling $(C^\prime, x - 1, y + 1)$, namely the state that would have arisen had $B$, and not $A$, won the last point.
	
	To show that the Best-of-$(2k+1)$ game played under \textit{TRa} is strategy-proof for $A$, we must show that $W_{A}(C, x, y) \geq W_{A}(C^\prime, x - 1, y + 1)$ whenever $(C, x, y)$ and $(C^\prime, x - 1, y + 1)$ are siblings. For purposes of induction, note that, if $x$ and $y$ satisfy $0 \leq x \leq k + 1$, $0 \leq y \leq k + 1$, and $x + y \leq 2k + 1$, then $W_{A}(C, x, y) = 1$ if $x = k + 1$ and $W_{A}(C, x, y) = 0$ if $y = k + 1$, where $C =A \text{ or } B$. In these cases, $(C, x, y)$ is a \emph{terminal state} and can be written $(x, y)$, as there are no more serves.
	
	\vspace{2mm}
	\noindent \textbf{Lemma 2.} \emph{Suppose that $x \not =y$ and the state is $(C, x, y)$. Then either $x < y$ and $C = A$, or $x > y$ and $C = B$.}
	\begin{proof} Under \textit{TRa}, the player about to serve must be the player with the lower score. \end{proof}
	
	\vspace{2mm}
	\noindent \textbf{Lemma 3.} \emph{The states $(A, x, y)$ and $(B, x, y)$ can both arise if and only if $0 < x = y < k + 1$. In this case, the state is $(A, x, x)$ if the parent was $(B, x, x - 1)$, and the state is $(B, x, x)$ if the parent was $(A, x - 1, x)$.}
	
	\begin{proof} The first statement follows from Lemma 2. The second is a paraphrase of \textit{TRa}.
	\end{proof}
	
	Note that Lemma 3 fails for \textit{TRb}; in fact, it captures the difference between \textit{TRa} and \textit{TRb}. For example, under \textit{TRb}, if $A$ loses at $(B, 1, 0)$, the next state will be $(B, 1, 1)$ (it would be $(A, 1, 1)$ under \textit{TRa}), and if $A$ wins at $(A, 0, 1)$, the next state will be $(A, 1, 1)$ (rather than $(B, 1, 1)$ under \textit{TRa}).
	
	\vspace{2mm}
	\noindent \textbf{Lemma 4.} \emph{$W_A(A, x, x) > W_A(B, x, x)$ if and only if $$W_A(B, x+1, x) > W_A(A, x, x+1).$$}
	
	\begin{proof} First notice that $W_A(A, x, x) = pW_A(B, x + 1, x) + (1-p)W_A(A, x, x+1)$ and $W_A(B, x, x) = (1-q)W_A(B, x+1, x) + qW_A(A, x, x+1)$. Therefore
		\[W_A(A, x, x) - W_A(B, x, x) = (p + q - 1) \left[W_A(B, x+1, x)- W_A(A, x, x+1) \right]\]
		and the claim follows from the assumption that $p + q > 1$.\end{proof}
	
	\begin{figure}
		\begin{center}
			\includegraphics[width=0.88\textwidth]{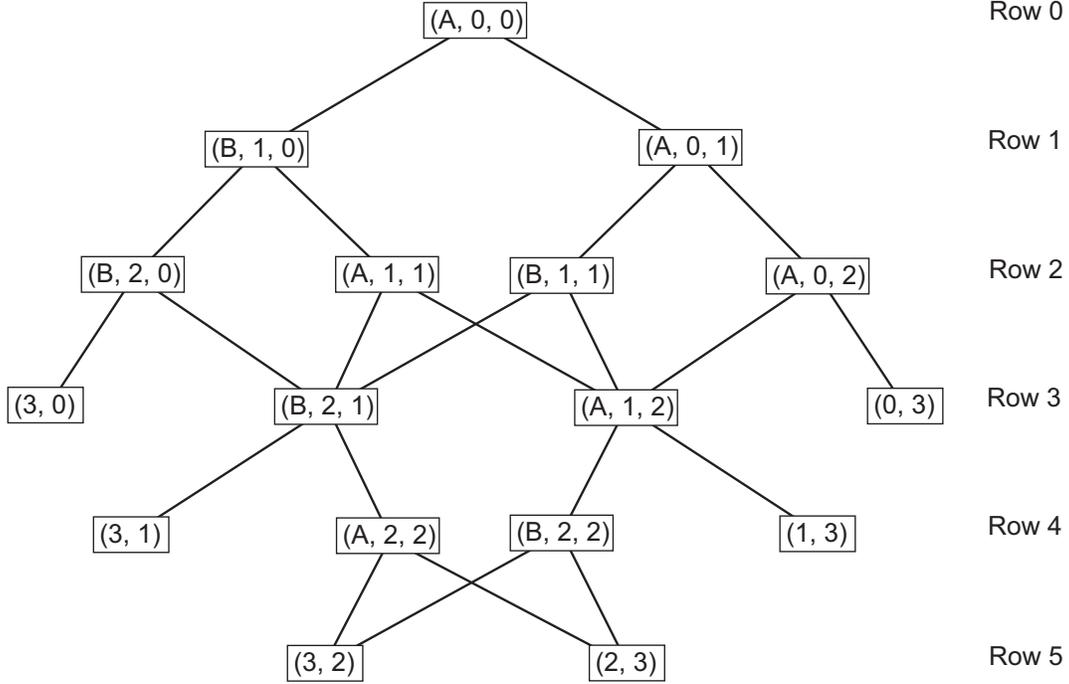}
			\caption{Best-of-5 \textit{TRa} probability tree.}
			\label{fig3}
		\end{center}
	\end{figure}
	
	We now consider the probability tree of a Best-of-$(2k + 1)$ game played under \textit{TRa}. The Best-of-5 tree ($k = 2$) is shown in Figure 3. The nodes (states) are labeled $(C, x, y)$, where $C$ is the player to serve, $x$ is the score of $A$, and $y$ is the score of $B$. Because there are no servers after the game has been won, the terminal nodes are labeled either $(k + 1, y)$ or $(x, k + 1)$. The probability tree is rooted at $(A, 0, 0)$ and is binary---every nonterminal node is parent to exactly two nodes. For nonterminal nodes where $A$ serves, the left-hand outgoing arc has probability $p$ and the right-hand outgoing arc has probability $1 - p$; for nonterminal nodes where $B$ serves, the two outgoing arcs have probability $1 - q$ and $q$, respectively.
	
	The probability tree for the Best-of-$(2k + 1)$ game played under \textit{TRa} has $2k + 2$ rows, numbered 0 (at the top) to $2k + 1$ (at the bottom). The top row contains only the initial state, $(A, 0, 0)$, and the bottom row contains only terminal nodes. At every node in row $\ell$, the players' scores sum to $\ell$.
	
	In the upper half of the tree, above row $k + 1$, there are no terminal nodes. Let $1 \leq \ell < k + 1$. If $\ell$ is odd, every state in row $\ell$ is of the form $(C, x, \ell - x)$. By Lemma 2, there are $\ell + 1$ states in row $\ell$, from $(B, \ell, 0)$ to $(A, 0, \ell)$. If $\ell$ is even, Lemmata 2 and 3 show that row $\ell$ contains $\ell + 2$ states, from $(B, \ell, 0)$ to $(A, 0, \ell)$, including both $(A, \frac\ell2, \frac\ell2)$ and $(B, \frac\ell2, \frac\ell2)$. We place $(A, \frac\ell2, \frac\ell2)$ to the left of $(B, \frac\ell2, \frac\ell2)$.
	
	In the lower half of the tree---row $k + 1$ and below---each row begins and ends with a terminal node. Let $k + 1 \leq \ell \leq 2k + 1$. Then the first entry in row $\ell$ is the terminal node $(k + 1, \ell - k - 1)$, and the last entry is the terminal node $(\ell - k - 1, k + 1)$. If $\ell$ is odd, every nonterminal node in row $\ell$ is a state of the form $(C, x, \ell - x)$ for $x = k, k - 1, \ldots, \ell - k$. (If $\ell = 2k + 1$, the only nodes in row $\ell$ are two terminal nodes.) By Lemma 2, row $\ell$ contains $2k - \ell + 3$ nodes in total. If $\ell$ is even, row $\ell$ contains $2k - \ell + 4$ nodes, from the terminal node $(k, \ell - k)$ to the terminal node $(\ell - k, k)$, including all possible states of the form $(C, x, \ell - x)$ for $x = k, k - 1, \ldots, \ell - k$. Among these states are both $(A, \frac\ell2, \frac\ell2)$ and $(B, \frac\ell2, \frac\ell2)$, with the former state on the left.
	
	As the figure illustrates, sibling states have a unique parent unless they are of the form $(B, x + 1, x)$ and $(A, x, x + 1)$, in which case both $(A, x, x)$ and $(B, x, x)$ are parents. Thus, $(B, x + 1, x)$ and $(A, x, x + 1)$ could be called ``double siblings."
	
	The method of proof is to show by induction that the function $W_A(\cdot)$ is decreasing on each row as one reads from left to right. This will prove that \textit{TRa} is strategy-proof for $A$, because it will show that, in every state, $A$ does better by winning the next point rather than losing it to obtain the sibling state. Since $W_B(\cdot) = 1 - W_A(\cdot)$, the proof also shows that $W_B(\cdot)$ is increasing on each row, and therefore that \textit{TRa} is strategy-proof for $B$.
	
	\begin{proof}To begin the induction, observe that $W_A(\cdot)$ is decreasing on row $2k + 1$ since $W_A(k + 1, k) = 1$ and $W_A(k, k + 1) = 0$. Now consider row $2k$, which begins with a terminal node $(k + 1, k - 1)$, where $W_A(k + 1, k-1) = 1$, and ends with a terminal node $(k - 1, k + 1)$, where $W_A(k-1, k+ 1) = 0$. Row $2k$ contains 4 nodes; its second entry is $(A, k, k)$ and its third is $(B, k, k)$. To apply Lemma 4 with $x = k$, note that $(B, k + 1, k)$ and $(A, k, k + 1)$ are the terminal nodes $(k + 1, k)$ and $(k, k + 1)$, and $W_A(k+ 1, k) = 1 > W_A(k, k+ 1) = 0$. Lemma 4 now implies that $W_A(A, k, k) > W_A(B, k, k)$, as required.
		
		Now consider any row $\ell$, and assume that $W_A(\cdot)$ has been shown to be strictly decreasing on row $\ell + 1$. If $\ell \geq k + 1$, row $\ell$ begins and ends with a terminal node, for which $W_A(k + 1, \ell - k - 1) = 1$ and $W_A(\ell - k - 1, k + 1) = 0$; any other node in row $\ell$ is a nonterminal node. Suppose that $(C, x, \ell - x)$ and $(C^\prime, x^\prime, \ell - x^\prime)$ are adjacent nodes in row $\ell$ and that $(C, x, \ell - x)$ is on the left. First suppose that $x = x^\prime$. Then by Lemma 3, $\ell$ must be even, and the two nodes must be $(A, \frac\ell2, \frac\ell2)$ (on the left) and $(B, \frac\ell2, \frac\ell2)$ (on the right). The induction assumption and Lemma 4 now implies that $W_A(A, \frac\ell2, \frac\ell2) > W_A(B, \frac\ell2, \frac\ell2)$.
		
		Otherwise, consecutive nodes $(C, x, \ell - x)$ and $(C^\prime, x^\prime, \ell - x^\prime)$ in row $\ell$ must satisfy $x^\prime = x - 1$, by Lemmata 2 and 3. Thus we are comparing $W_A(C, x, \ell - x)$ with $W_A(C^\prime, x - 1, \ell - x + 1)$. Now
		\begin{align*}
		W_A(C, x, \ell - x) &= r W_A(D, x + 1, \ell - x) + (1 - r)W_A(D^\prime, x, \ell - x + 1) \\
		&> W_A(D^\prime, x, \ell - x + 1),\\
		W_A(C^\prime, x - 1, \ell& - x + 1)\\
		&~= s W_A(E, x, \ell - x + 1) + (1 - s)W_A(E^\prime, x - 1, \ell - x + 2) \\
		&~> W_A(E, x, \ell - x + 1),
		\end{align*}
		where $0 < r, s < 1$ because each of $r$ and $s$ must equal one of $p$, $q$, $1 - p$, and $1 - q$.
		
		According to Lemma 3, $(D^\prime, x, \ell - x + 1) = (E, x, \ell - x + 1)$ unless $x = \ell - x + 1$, i.e., $x = \frac {\ell + 1}{2}$ (which of course requires that $\ell$ be odd). First assume that $x \not= \frac {\ell + 1}{2}$. Then we have shown that
		\begin{align*}
		W_A(C, x, \ell - x) &> W_A(D^\prime, x, \ell - x + 1)\\
		&= W_A(E, x, \ell - x + 1) > W_A(C^\prime, x - 1, \ell - x + 1),
		\end{align*}
		as required.
		
		Now assume that $x = \frac {\ell + 1}{2}$. Then the original states $(C, x, \ell - x)$ and $(C^\prime, x - 1, \ell - x + 1)$ must have been $(B, \frac{\ell + 1}{2}, \frac{\ell - 1}{2})$ and $(A, \frac{\ell - 1}{2}, \frac{\ell + 1}{2})$. Moreover, $(D^\prime, x, \ell - x + 1) = (A, \frac {\ell + 1}{2}, \frac {\ell + 1}{2})$ and $(E, x, \ell - x + 1) = (B, \frac{\ell + 1}{2}, \frac{\ell + 1}{2})$. Recall that $(A, x, x)$ always appears to the left of $(B, x, x)$. Thus the assumption that $W_A(\cdot)$ is decreasing on row $\ell + 1$ implies that $W_A(A, \frac {\ell + 1}{2}, \frac {\ell + 1}{2}) > W_A(B, \frac {\ell + 1}{2}, \frac {\ell + 1}{2})$. By Lemma 4, we have shown that
		\begin{align*}
		W_A(B, \frac{\ell + 1}{2}, \frac{\ell - 1}{2}) &> W_A(A, \frac {\ell + 1}{2}, \frac {\ell + 1}{2})\\
		&> W_A(B, \frac {\ell + 1}{2}, \frac {\ell + 1}{2}) > W_A(A, \frac{\ell - 1}{2}, \frac{\ell + 1}{2}),
		\end{align*}
		completing the proof of Theorem 3.
	\end{proof}

\noindent\textbf{Acknowledgment.}
		We thank Susan Jane Colley and two anonymous referees, as well as the participants of Dagstuhl Seminar 16232, for their very helpful comments that considerably improved this article.
	
	%

\vfill\eject

\end{document}